\swapnumbers \numberwithin{equation}{section}
\theoremstyle{plain}
\newtheorem{thm}{Theorem}[section]
\newtheorem{lemma}[thm]{Lemma}
\newtheorem{prop}[thm]{Proposition}
\newtheorem{cor}[thm]{Corollary}
\theoremstyle{definition}
\newtheorem{defn}[thm]{Definition}
\newtheorem{remark}[thm]{Remark}
\newtheorem{question}[thm]{Question}
 \newcommand{\Wi}{\widetilde}
\def\Int{\protect\operatorname{Int}}
\def\Z{{\mathbb Z}}
\def\1{\hbox{\rm\rlap {1}\hskip.03in{\rom I}}}
\def\Bbbone{{\rm1\mathchoice{\kern-0.25em}{\kern-0.25em}
{\kern-0.2em}{\kern-0.2em}I}}
\long\def\forget#1\forgotten{} %
\newcommand\ver[1]{\marginpar{\tiny Changed in Ver \VER}}
\newcommand{\mc}{ \text {mc}}
\date{\today}
\begin{document}

\title[Positive scalar curvature]{Positive scalar curvature and strongly inessential manifolds}

\author[A.~Dranishnikov]{Alexander  Dranishnikov$^{1}$} 
\thanks{$^{1}$Supported by Simons Foundation}

\address{Alexander N. Dranishnikov, Department of Mathematics, University
of Florida, 358 Little Hall, Gainesville, FL 32611-8105, USA}
\email{dranish@math.ufl.edu}

\subjclass[2000]
{Primary 53C23 
Secondary 57N65,  
57N65  
}

\keywords{}

\begin{abstract}
We prove that a closed $n$-manifold $M$ with positive scalar curvature and abelian fundamental group admits a finite covering $M'$
which is strongly inessential. The latter means that a classifying map $u:M'\to K(\pi_1(M'),1)$ can be deformed to the $(n-2)$-skeleton.
This is proven for all $n$-manifolds with the exception of 4-manifolds with spin universal coverings.
\end{abstract}


  \keywords{inessential manifold, surgery, positive scalar curvature, macroscopic dimension}

\maketitle
\section {Introduction}

The notion of macroscopic dimension was introduced by M.
Gromov~\cite{G2} to study topology of manifolds with a positive scalar curvature  metric.

\begin{defn}
 A metric space $X$ has the macroscopic dimension $\dim_{\mc} X \leq k$ if
there is a uniformly cobounded proper map $f:X\to K$ to a $k$-dimensional simplicial complex.
Then $\dim_{mc}X=m$ where $m$ is minimal among $k$ with $\dim_{\mc} X \leq k$.
\end{defn}
\smallskip
A map of a metric space $f:X\to Y$ is uniformly cobounded if there is a uniform upper bound on the diameter of preimages $f^{-1}(y)$, $y\in Y$.

\

{\bf Gromov's Conjecture.} {\it The macroscopic dimension of the universal covering $\Wi M$ of  a closed  positive scalar curvature  $n$-manifold $M$ satisfies the inequality $\dim_{mc}\Wi M\leq n-2$ for the metric on $\Wi M$ lifted from $M$}.

\

The main examples supporting Gromov's Conjecture are $n$-manifolds of the form $M=N\times S^2$. They admit metrics with PSC in view of
the formula $Sc_{x_1,x_2}=Sc_{x_1}+Sc_{x_2}$ for the Cartesian product $(X_1\times X_2,\mathcal G_1\oplus\mathcal G_2)$
of two Riemannian manifolds $(X_1,\mathcal G_1)$ and $(X_2,\mathcal G_2)$ and the fact that while $Sc_N$ is bounded $Sc_{S^2}$ can be chosen
to be arbitrary large. Clearly, the projection $p:\Wi M=\Wi N\times S^2\to\Wi N$ is a proper uniformly cobounded map to a $(n-2)$-dimensional manifold
which can be triangulated.
Hence, $\dim_{mc}\Wi M\le n-2$.

Since $\dim_{mc}X=0$ for every compact metric space, the
Gromov Conjecture holds trivially for  manifolds with finite fundamental groups. Therefore, Gromov's conjecture  is about manifolds with infinite fundamental groups.
We note that even a weaker version of the Gromov Conjecture that predicts the inequality $\dim_{mc}\Wi M\leq n-1$ for positive scalar curvature manifolds  is out of reach,
since it implies perhaps the famous Gromov-Lawson's Conjecture: {\em A closed aspherical manifold cannot carry a metric of positive scalar curvature}.
The latter is known as a twin sister of the famous Novikov Higher Signature conjecture. Both conjectures are proven only for some tame classes of groups. 
The Gromov Conjecture is proven  in far fewer cases~\cite{B2},\cite{Dr1},\cite{BD2},\cite{DD}.

We note that in the study of topology of positive scalar curvature manifolds it makes sense to consider three different cases: the case of spin manifolds, almost spin manifolds, and totally non-spin
manifolds. The reason for that is the existence of the index theory in the first two cases. Thus, the case of totally non-spin manifolds is the most difficult.
Here we adopt the names {\em almost spin} for manifolds with the spin universal covering and {\em totally non-spin} for manifolds whose universal coverings are non-spin.

Goromov defined {\em inessential manifolds} $M$ as those for which a classifying map $u:M\to B\Gamma$ of the universal covering $\Wi M$ can be deformed to the $(n-1)$-skeleton $B\Gamma^{(n-1)}$ where $n=\dim M$. Clearly, for an inessential $n$-manifold $M$  we have $\dim_{mc}\Wi M\leq n-1$.
In the case of spin manifolds Rosenberg's vanishing index theorem~\cite{R1} implies that  a positive scalar curvature manifold 
with the fundamental group $\Gamma$ satisfying the Analytic Novikov conjecture and 
the Rosenberg-Stolz condition on injectivity of the real K-theory periodization map $per: ko_*(B\Gamma)\to KO_*(B\Gamma)$ is inessential~\cite{BD1}.

Having this in mind we introduce even a stronger version of  Gromov's Conjecture. For that we extend Gromov's definition of inessentiality to the following.
We call an $n$-manifold {\em strongly inessential} if a classifying map of its universal covering $u:M\to B\Gamma$ can be deformed to the $(n-2)$-skeleton.
Answering Gromov's question Bolotov constructed an example of an inessential manifold which is not strongly inessential~\cite{B1}.

\

{\bf Strong Gromov's Conjecture.} {\it  A closed  positive scalar curvature  manifold $M$ admits a strongly inessential finite covering $M'$.}

\

We note that, since the universal cover of $M'$ coincides with the universal cover of $M$, the Strong Gromov Conjecture implies the original one.

In this paper we prove the Strong Gromov Conjecture in the case of abelian fundamental groups. We have one exception here: Our proof does not work 
for almost spin 4-dimensional manifolds. In this case we can show the existence of an inessential finite cover but we cannot prove ts strong inessentiality. 
In the totally non-spin case we were dealing with an opposite problem: We have a technique~\cite{BD2},\cite{DD} to derive the strong inessentiality from the
inessentiality but proving the latter became possible only after recent work of Schoen and Yau~\cite{SY}.

\begin{remark} We note that for spin manifolds the Strong Gromov Conjecture in the case of abelian fundamental group follows from
the main result of~\cite{BD1}. Unfortunately,  Lemma 4.1 in~\cite{BD1}, which is essential for the main result, has a gap in its proof and the attempt 
to fix it  in~\cite{Dr2}, Lemma 6.2 failed as well. The problem there can be reduced to the following question about stable homotopy groups:
\end{remark}
\begin{question}
For which finitely presented  groups $\Gamma$ the natural homomorphism from the coinvariants $\xi:\pi^s_n(E\Gamma^{(n-1)})_\Gamma\to\pi^s_n(B\Gamma^{(n-1)})$ is injective for all $n> 4$ ?
\end{question}

\section {Preliminaries}

We recall that for
every spectrum $E$ there is a connective cover $e \to E$, that is  the spectrum $e$ with the
morphism $ e\to  E$ that induces isomorphisms of homotopy  groups $\pi_i(e) \to \pi_i(E)$ for $i\ge 0$ and
with $\pi_i(e) = 0$ for $i < 0$ . By $KO$ we denote the spectrum for real K-theory, by
$ko$ its connective cover, and by $per : ko\to KO$ the corresponding 
morphism of spectra. We use the standard notation $\pi^s_*$ for the stable homotopy groups.
For a spectrum $E$ we will use  the old-fashioned  notation $ E_*(X)$ for the $E$-homology
of a space $X$.

The following proposition is taken from~\cite{BD1}.

\begin{prop}\label{pi-ko-iso}
	The natural transformation $\pi_*^s(pt)\to ko_*(pt)$ induces an
	isomorphism $\pi_{n}^s(K/K^{(n-2)})\to ko_{n}(K/K^{(n-2)})$ for any CW
	complex $K$.
\end{prop}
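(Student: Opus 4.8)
The natural transformation in the statement is the one induced on homology theories by the unit map of ring spectra $u\colon\mathbb{S}\to ko$, where $\mathbb{S}$ is the sphere spectrum; on a point it is the usual Hurewicz-type homomorphism $\pi^s_*\to ko_*$. The plan is to control the cofiber of $u$ after smashing with $K/K^{(n-2)}$, using only the low-dimensional stable stems and $ko$-groups. Recall that
\[
\pi^s_0=ko_0=\mathbb{Z},\qquad \pi^s_1=ko_1=\mathbb{Z}/2,\qquad \pi^s_2=ko_2=\mathbb{Z}/2,\qquad \pi^s_3=\mathbb{Z}/24,\qquad ko_3=0,
\]
and that $u$ induces an isomorphism $\pi^s_i\to ko_i$ for $i\le 2$, since $u$ is a ring homomorphism taking the Hopf class $\eta\in\pi^s_1$ to the Hopf class $\eta\in ko_1$.

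First I would form the cofiber sequence of spectra $\mathbb{S}\xrightarrow{\,u\,}ko\to C$ and run its long exact homotopy sequence. Since $u_*$ is an isomorphism in degrees $\le 2$ and $ko_3=0$, one reads off $\pi_i(C)=0$ for all $i\le 3$; in particular $C$ is $2$-connected. This is the only input that uses anything specific about $\mathbb{S}$ and $ko$, and it is the step carrying the weight of the argument, although it is entirely elementary once the stable stems and $ko$-groups through degree $3$ are on the table.

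Next, set $X=\Sigma^\infty(K/K^{(n-2)})$. Since $K/K^{(n-2)}$ has cells only in dimensions $\ge n-1$, the spectrum $X$ is $(n-2)$-connected. Smashing the cofiber sequence with $X$ gives a cofiber sequence $X\xrightarrow{\,u\wedge 1\,}ko\wedge X\to C\wedge X$ in which $\pi_*(X)=\pi^s_*(K/K^{(n-2)})$, $\pi_*(ko\wedge X)=ko_*(K/K^{(n-2)})$, and $(u\wedge 1)_*$ is precisely the map of the statement. By the standard connectivity estimate for smash products (an $a$-connected spectrum smashed with a $b$-connected one is $(a+b+1)$-connected), the spectrum $C\wedge X$ is $(2+(n-2)+1)=(n+1)$-connected, so $\pi_n(C\wedge X)=\pi_{n+1}(C\wedge X)=0$.

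Finally, the relevant segment of the long exact homotopy sequence of the cofiber sequence $X\to ko\wedge X\to C\wedge X$ reads
\[
0=\pi_{n+1}(C\wedge X)\longrightarrow\pi^s_n(K/K^{(n-2)})\xrightarrow{\,(u\wedge 1)_*\,}ko_n(K/K^{(n-2)})\longrightarrow\pi_n(C\wedge X)=0,
\]
whence $(u\wedge 1)_*$ is an isomorphism, which is the assertion. As an alternative to the last three steps one could instead compare the Atiyah--Hirzebruch spectral sequences of $\pi^s$ and $ko$ for $K/K^{(n-2)}$: by connectivity only the slots $(n,0)$ and $(n-1,1)$ contribute in total degree $n$, the map of spectral sequences is an isomorphism there because $\pi^s_q=ko_q$ for $q\le 1$ and the relevant $d_2$ differentials agree, and the five lemma applied to the resulting two-step filtration finishes the proof; the cofiber argument above is however shorter.
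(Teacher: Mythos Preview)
The paper does not actually supply a proof of this proposition; it merely records that the statement is taken from~\cite{BD1}. So there is no in-paper argument to compare against, and the task reduces to checking your proposal on its own merits.

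Your cofiber argument is correct. The key computation is that the unit $u\colon\mathbb{S}\to ko$ is an isomorphism on $\pi_i$ for $i\le 2$ (this uses only that $u$ is a ring map and that $\eta$ acts nontrivially on $ko_0$, hence $u_*(\eta)\neq 0$ and likewise $u_*(\eta^2)\neq 0$). Combined with $ko_3=0$ this gives $\pi_i(C)=0$ for $i\le 3$; in fact $C$ is $3$-connected, not just $2$-connected, though as you note $2$-connectedness already suffices. The smash-connectivity estimate and the long exact sequence of $X\to ko\wedge X\to C\wedge X$ then finish the proof cleanly. Your alternative Atiyah--Hirzebruch comparison is also valid; the only point deserving a word of justification is that the incoming differential $d_2\colon E^2_{n+1,0}\to E^2_{n-1,1}$ agrees for $\mathbb{S}$ and $ko$, which follows since both first $k$-invariants are $Sq^2\circ\rho$ (or simply because the map of spectral sequences is induced by $u$ and is an isomorphism on the relevant $E^2$-entries, hence commutes with $d_2$). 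The cofiber proof avoids this bookkeeping entirely and is the cleaner route.
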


We recall that the group of oriented relative bordisms    $\Omega_n(X,Y)$
of  the pair $(X,Y)$ consists of the equivalence classes of pairs $(M,f)$
where $M$ is an oriented $n$-manifold with boundary and $f:(M,\partial M)\to (X,Y)$ is a continuous map.  Two pairs $(M,f)$ and $(N,g)$ are equivalent if there is a pair $(W,F)$, $F:W\to X$  called a {\em bordism} where $W$ is an orientable $(n+1)$-manifold with boundary such that  $\partial W =
M\cup W'\cup N$, $W'\cap M=\partial M$, $W'\cap N=\partial N$, $F|_M=f$,  $F|_N=g$, and $F(W')\subset Y$.

In the special case when $X$ is a point, the manifold $W$ is called a bordism between $M$ and $N$.
The following proposition is proven in~\cite{BD2}.

\begin{prop}\label{bordism} For any CW complex $K$
there is an isomorphism $$\Omega_n(K,K^{(n-2)})\cong H_n(K,K^{(n-2)}).$$
\end{prop}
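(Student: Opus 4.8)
The plan is to prove that the natural transformation from oriented bordism to reduced homology,
\[
\mu\colon \Omega_n(K,K^{(n-2)})\to H_n(K,K^{(n-2)}),
\]
sending $(M,f)$ to $f_*[M,\partial M]$, is an isomorphism. The key structural input is the Atiyah--Hirzebruch spectral sequence for the generalized homology theory $\Omega_*(-)=M\mathrm{SO}_*(-)$ applied to the pair $(K,K^{(n-2)})$, together with the fact that $H_i(K,K^{(n-2)})=0$ for $i\le n-2$ (by the long exact sequence of the pair and the isomorphism $H_i(K^{(n-2)})\to H_i(K)$ for $i\le n-2$). The spectral sequence has $E^2_{p,q}=H_p(K,K^{(n-2)};\Omega_q(pt))$ converging to $\Omega_{p+q}(K,K^{(n-2)})$, and by the vanishing just noted the only columns $p$ that can contribute in total degree $n$ are $p\ge n-1$. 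Since $\Omega_q(pt)=0$ for $q=1,2,3$ and $\Omega_0(pt)=\Z$, the contributions in total degree $n$ come from $E^\infty_{n,0}$ and possibly $E^\infty_{n-1,1}$; but $E^2_{n-1,1}=H_{n-1}(K,K^{(n-2)};\Omega_1(pt))=0$ since $\Omega_1(pt)=0$. Hence the only nonzero term on the total-degree-$n$ line is $E^\infty_{n,0}$, which is a subquotient of $E^2_{n,0}=H_n(K,K^{(n-2)};\Z)$.

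Next I would check that no differentials touch the $(n,0)$ entry. Incoming differentials $d_r\colon E^r_{n+r-1,1-r}\to E^r_{n,0}$ land in total degree $n$; their source sits in filtration degree $n+r-1$ and coefficient degree $1-r<0$ for $r\ge 2$, so $E^2_{n+r-1,1-r}=0$ and the differential is zero. Outgoing differentials $d_r\colon E^r_{n,0}\to E^r_{n-r,r-1}$ have target in total degree $n-1$; for $r=2$ the target is $E^2_{n-2,1}=H_{n-2}(K,K^{(n-2)};\Omega_1(pt))$, which vanishes both because $\Omega_1(pt)=0$ and because $H_{n-2}(K,K^{(n-2)})=0$; for $r\ge 3$ the target has filtration $n-r\le n-3$ and $H_{n-r}(K,K^{(n-2)})=0$. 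Therefore $E^\infty_{n,0}=E^2_{n,0}=H_n(K,K^{(n-2)};\Z)$, and the filtration of $\Omega_n(K,K^{(n-2)})$ has a single nonzero quotient, giving an isomorphism $\Omega_n(K,K^{(n-2)})\cong H_n(K,K^{(n-2)})$. One then identifies this abstract isomorphism with $\mu$ by noting that the edge homomorphism of the Atiyah--Hirzebruch spectral sequence is precisely the Hurewicz-type map $(M,f)\mapsto f_*[M]$.

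I would handle the reduction to finite complexes and the limit over skeleta routinely: both $\Omega_*$ and $H_*$ commute with direct limits, so it suffices to treat finite $K$, where the spectral sequence argument applies verbatim. The main obstacle I anticipate is purely bookkeeping: being careful that the pair $(K,K^{(n-2)})$ really does kill homology in degrees $\le n-2$ (this uses that $(K,K^{(n-2)})$ is $(n-2)$-connected as a pair, i.e.\ the inclusion $K^{(n-2)}\hookrightarrow K$ is $(n-2)$-connected), and that this connectivity also forces the relevant low-degree entries of the spectral sequence to vanish, so that the only surviving contribution in total degree $n$ is the one on the zero line. Everything else — the vanishing of $\Omega_1,\Omega_2,\Omega_3$, and the identification of the edge map with $\mu$ — is standard bordism theory.
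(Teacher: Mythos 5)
Your proof is correct. The paper does not give a proof of this proposition at all; it is stated as quoted from~\cite{BD2}, so there is no in-paper argument to compare against. Your Atiyah--Hirzebruch spectral sequence argument for $\Omega_*=MSO_*$ applied to $(K,K^{(n-2)})$ is the standard route: the vanishing $H_p(K,K^{(n-2)};A)=0$ for $p\le n-2$ (indeed the relative cellular chain groups vanish in those degrees) combined with $\Omega_1(pt)=\Omega_2(pt)=\Omega_3(pt)=0$ forces $E^\infty_{n,0}=E^2_{n,0}=H_n(K,K^{(n-2)})$ to be the only surviving entry of total degree $n$, and the edge homomorphism identifies the resulting isomorphism with the natural Steenrod--Thom map $(M,f)\mapsto f_*[M,\partial M]$. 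One cosmetic slip: the incoming differential to $E^r_{n,0}$ has source $E^r_{n+r,\,1-r}$ (total degree $n+1$), not $E^r_{n+r-1,\,1-r}$; this does not affect the argument, since the source still has negative second index and hence vanishes.
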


\

We recall the following classical result~Corollary 6.10.3, \cite{TtD}:
\begin{thm}\label{h-excision}
	Suppose that a CW-complex pair $(X,A)$ satisfies the conditions $\pi_i(X)=0$ for $i\le m$ and $\pi_i(A)=0$ for $i\le m-1$ with $m\ge 2$. Then the quotient map $q:(X,A)\to (X/A,\ast)$
	induces isomorphisms $q_*:\pi_i(X,A)\to\pi_i(X/A)$ for $i\le 2m-1$.
\end{thm}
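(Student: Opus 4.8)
The plan is to reduce \theoref{h-excision} to the Blakers--Massey homotopy excision theorem by replacing the quotient pair $(X/A,\ast)$ with the mapping cone $Y=X\cup_A CA$ of the inclusion $A\hookrightarrow X$. First I would record the connectivities that the hypotheses supply: from the long exact sequence of the pair $(X,A)$ together with $\pi_i(X)=0$ for $i\le m$ and $\pi_i(A)=0$ for $i\le m-1$ one gets $\pi_i(X,A)=0$ for $i\le m$, so $(X,A)$ is $m$-connected; and since $CA$ is contractible, the long exact sequence of $(CA,A)$ gives $\pi_i(CA,A)\cong\pi_{i-1}(A)=0$ for $i\le m$, so $(CA,A)$ is $m$-connected as well.

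Next I would regard $Y=X\cup_A CA$ as a CW complex with subcomplexes $Y_1=X$ and $Y_2=CA$ satisfying $Y_1\cap Y_2=A$ and $Y_1\cup Y_2=Y$, so that the inclusion of pairs $(X,A)\hookrightarrow(Y,CA)$ is exactly the excision map of this triad. Applying Blakers--Massey, with both $(X,A)$ and $(CA,A)$ being $m$-connected and $m\ge 2$, yields that this inclusion induces an isomorphism on $\pi_i$ for $i\le 2m-1$ (and an epimorphism for $i=2m$).

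It then remains to identify $(Y,CA)$ with $(X/A,\ast)$ up to homotopy. Since $CA$ is contractible, the exact sequence of $(Y,CA)$ gives $\pi_i(Y,CA)\cong\pi_i(Y)$; since $(X,A)$ is a CW pair and $CA\subset Y$ is a contractible subcomplex, the collapse $Y\to Y/CA=X/A$ is a homotopy equivalence, so $\pi_i(Y)\cong\pi_i(X/A)$; and these identifications are compatible with the maps induced by collapsing, so the collapse $(Y,CA)\to(X/A,\ast)$ induces $\pi_i(Y,CA)\cong\pi_i(X/A)$ in every degree. Finally, the quotient map factors as $q\colon(X,A)\hookrightarrow(Y,CA)\xrightarrow{\mathrm{collapse}}(X/A,\ast)$, so $q_*$ is an isomorphism for $i\le 2m-1$.

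The only genuinely substantive ingredient is the Blakers--Massey excision theorem invoked in the second paragraph; its proof (whether by the classical cellular/general-position argument or via the modern machinery of cubical diagrams of spaces) is the real obstacle, whereas the remaining steps are routine manipulations of long exact sequences and of the cofibration property of CW pairs. Since the statement is classical, in the paper it is simply quoted from~\cite{TtD}.
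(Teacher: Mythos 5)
Your proof is correct. The paper does not prove this statement; it is quoted verbatim from tom Dieck (Corollary 6.10.3 in \cite{TtD}), and the argument you give — pass to the mapping cone $Y=X\cup_A CA$, apply Blakers--Massey to the triad $(Y;X,CA)$ using the $m$-connectivity of $(X,A)$ and of $(CA,A)$, then collapse the contractible subcomplex $CA$ — is precisely how tom Dieck derives that corollary from his homotopy excision theorem, so you have essentially reconstructed the reference rather than found a new route.
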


\section{inessential manifolds}

\begin{defn}
An $n$-manifold $M$  with fundamental group $\Gamma$ is called  {\em inessential} if  a classifying map $u_M:M\to B\Gamma$ of its universal covering 
can be deformed into the $(n-1)$-skeleton  $B\Gamma^{(n-1)}$.
\end{defn}

Note that for an inessential $n$-manifold $M$ we have $\dim_{mc}\Wi M\le n-1$. Indeed, a lift $\Wi{u_M}:\Wi M\to E\Gamma^{(n-1)}$ of a classifying
map is a uniformly cobounded proper map to an $(n-1)$-complex.

Establishing  inessentiality of positive scalar curvature  manifolds  is the first step in a proof of the strong Gromov conjecture. 
We recall that the inessentiality of a manifold can be characterized as follows~\cite{Ba} (see also~\cite{BD1}, Proposition 3.2). 
\begin{thm}\label{ref} Let $M$ be a closed oriented $n$-manifold. 
Then the following are equivalent:

1. $M$ is inessential;

2. $(u_M)_*([M])=0$ in $H_n(B\Gamma)$ where $[M]$ is the fundamental class of $[M]$.  
\end{thm}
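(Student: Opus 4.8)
The plan is to prove the equivalence of inessentiality with the vanishing of $(u_M)_*([M])$ in $H_n(B\Gamma)$ by an obstruction-theoretic argument, using the fact that $B\Gamma$ can be built from $B\Gamma^{(n)}$ by attaching cells of dimension $\ge n+1$, which are killed when we compute $n$-dimensional homology.

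First I would prove the easy direction, $(1)\Rightarrow(2)$: if the classifying map $u_M$ is homotopic to a map $u'_M$ with image in $B\Gamma^{(n-1)}$, then $(u_M)_*([M]) = (u'_M)_*([M])$ factors through $H_n(B\Gamma^{(n-1)}) = 0$, since an $(n-1)$-dimensional complex has trivial homology in degree $n$. For the converse $(2)\Rightarrow(1)$, I would argue by pushing the map down the skeletal filtration one dimension at a time. Since $\dim M = n$, cellular approximation already lets us assume $u_M$ lands in $B\Gamma^{(n)}$. The task is to deform it off the single dimension from $B\Gamma^{(n)}$ into $B\Gamma^{(n-1)}$. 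The obstruction to deforming a map $M \to B\Gamma^{(n)}$ into $B\Gamma^{(n-1)}$, after a preliminary homotopy making the map transverse / cellular, is a well-defined class in $H^n(M; \pi_n(B\Gamma^{(n)}, B\Gamma^{(n-1)}))$; by the Hurewicz theorem $\pi_n(B\Gamma^{(n)}, B\Gamma^{(n-1)})$ is the free abelian group $H_n(B\Gamma^{(n)}, B\Gamma^{(n-1)})$ on the $n$-cells, which is also the group of cellular $n$-chains $C_n(B\Gamma)$. Poincaré duality on the closed oriented $n$-manifold $M$ identifies $H^n(M; C_n(B\Gamma))$ with $H_0(M; C_n(B\Gamma)) = C_n(B\Gamma)$, and under this identification the obstruction is precisely the cellular cycle representing $(u_M)_*([M]) \in H_n(B\Gamma^{(n)})$; since the $n$-cells of $B\Gamma$ are all of $C_n(B\Gamma)$ and $(u_M)_*([M]) = 0$ in $H_n(B\Gamma) = H_n(B\Gamma^{(n)})/\partial C_{n+1}(B\Gamma)$, this cycle is a boundary, and one checks the obstruction therefore vanishes. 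Hence $u_M$ deforms into $B\Gamma^{(n-1)}$, which is the definition of inessentiality.

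Alternatively, and perhaps more cleanly, for $(2)\Rightarrow(1)$ I would invoke the characterization via the Berstein--Svarc class or simply argue as follows: consider the cofiber sequence $B\Gamma^{(n-1)} \hookrightarrow B\Gamma^{(n)} \to B\Gamma^{(n)}/B\Gamma^{(n-1)} = \bigvee S^n$. A map $f : M \to B\Gamma^{(n)}$ deforms into $B\Gamma^{(n-1)}$ if and only if the composite $M \to B\Gamma^{(n)} \to \bigvee S^n$ is null-homotopic. Since $\dim M = n$, a map $M \to \bigvee S^n$ is null-homotopic iff it is zero on $H_n$ (by obstruction theory, or because $[M, \bigvee S^n] \to \mathrm{Hom}(H_n M, H_n(\bigvee S^n))$ is injective in this dimension for $M$ a closed oriented $n$-manifold — indeed both sides compute $H^n(M;\mathbb Z)^{(\text{number of spheres})}$). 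The induced map on $H_n$ is exactly the image of $(u_M)_*([M])$ under $H_n(B\Gamma^{(n)}) \to H_n(\bigvee S^n) = C_n(B\Gamma)$, i.e. the cellular cycle of $(u_M)_*([M])$, which is null iff $(u_M)_*([M]) = 0$ in the quotient $H_n(B\Gamma)$.

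The main obstacle I anticipate is the bookkeeping in the converse direction: one must be careful that the obstruction to the one-step deformation really is a \emph{single} well-defined cohomology class (rather than a coset depending on choices) and that it is computed correctly by the fundamental class. This is where the hypotheses that $M$ is closed and oriented are used — they give the Poincaré duality isomorphism pairing the top cohomology obstruction against the fundamental class — and where one should cite the classical obstruction theory carefully (this is precisely the content of the references~\cite{Ba},~\cite{BD1}). Once that identification is in place, the equivalence follows formally from $H_n(B\Gamma) = \mathrm{coker}(\partial\colon C_{n+1}(B\Gamma)\to C_n(B\Gamma) \cap \ker\partial)$ and the remark that $B\Gamma$ has no cells in dimensions strictly between $n-1$ and this point that would obstruct the argument.
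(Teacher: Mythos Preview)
The paper does not prove this theorem; it is quoted as a known result from Babenko~\cite{Ba} (with a pointer to~\cite{BD1}, Proposition~3.2), so there is no argument in the text to compare against.

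Your obstruction-theoretic approach is the standard one and $(1)\Rightarrow(2)$ is fine, but both versions of your argument for $(2)\Rightarrow(1)$ share the same gap. You set up the compression problem inside $B\Gamma^{(n)}$, so the coefficient module is $\pi_n(B\Gamma^{(n)},B\Gamma^{(n-1)})$; after Poincar\'e duality the obstruction becomes the cellular \emph{chain} $c\in C_n(B\Gamma)$ itself. The hypothesis $(u_M)_*[M]=0$ in $H_n(B\Gamma)$ only says that $c$ is a boundary, not that $c=0$, so the phrase ``one checks the obstruction therefore vanishes'' is exactly the missing step. The same issue recurs in your cofiber version: the composite $M\to B\Gamma^{(n)}\to\bigvee S^n$ is null iff $c=0$ in $H_n(\bigvee S^n)=C_n(B\Gamma)$, which is strictly stronger than $c\in B_n(B\Gamma)$; and in any case nullity of the composite to a cofiber does not by itself imply compression (the double cover $S^2\to\R P^2$ composes to a degree-zero map $S^2\to\R P^2/\R P^1=S^2$, yet does not compress into $\R P^1$).

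The repair is to compress within $B\Gamma$ rather than within $B\Gamma^{(n)}$. Then the coefficient module is $\pi_n(B\Gamma,B\Gamma^{(n-1)})\cong H_n(E\Gamma,E\Gamma^{(n-1)})\cong C_n(E\Gamma)/\partial C_{n+1}(E\Gamma)$, whose $\Gamma$-coinvariants are $C_n(B\Gamma)/B_n(B\Gamma)$, and now the Poincar\'e-dual of the (unique, top-dimensional) obstruction is exactly the class of $c$ modulo boundaries, i.e.\ $(u_M)_*[M]\in H_n(B\Gamma)$. You should also make explicit that these coefficients carry a nontrivial $\Gamma$-action (they are the free $\Z\Gamma$-module $C_n(E\Gamma)$, not the abelian group $C_n(B\Gamma)$), even though after passing to $H^n(M;-)\cong H_0(M;-)=(-)_\Gamma$ the two give the same target group.
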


In~\cite{BD1} we proved the following addendum to Theorem~\ref{ref}.
\begin{prop}[\cite{BD1}, Lemma 3.5]\label{ref2}
For an inessential manifold $M$  
with a CW complex structure a classifying map $u:M\to B\Gamma$ can be chosen such that 
$$u(M,M^{(n-1)})\subset (B\Gamma^{(n-1)},B\Gamma^{(n-2)}).$$
\end{prop}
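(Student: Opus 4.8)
The plan is to start from the fact that $M$ is inessential, so by \theoref{ref} a classifying map $u\colon M\to B\Gamma$ satisfies $u_*([M])=0$ in $H_n(B\Gamma)$. Since $M$ is a CW complex and $B\Gamma$ has cells in all dimensions, one can first homotope $u$ so that $u(M^{(k)})\subset B\Gamma^{(k)}$ for all $k$ by cellular approximation; in particular $u(M^{(n-1)})\subset B\Gamma^{(n-1)}$. The real work is to improve this so that the $(n-1)$-skeleton of $M$ actually lands in the $(n-2)$-skeleton of $B\Gamma$. The idea is to view the obstruction as living in the relative group: consider the composite $M^{(n-1)}\to B\Gamma^{(n-1)}\to B\Gamma^{(n-1)}/B\Gamma^{(n-2)}$, which is a wedge of $(n-1)$-spheres, and note that the restriction $u|_{M^{(n-1)}}$ represents a class that we want to kill.

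The key step is a diagram chase using the long exact sequences of the pairs $(M,M^{(n-1)})$ and $(B\Gamma,B\Gamma^{(n-1)})$ together with the vanishing $u_*([M])=0$. Because $M^{(n-1)}$ is the $(n-1)$-skeleton of an $n$-manifold, the map $H_n(M,M^{(n-1)})\to H_{n-1}(M^{(n-1)})$ sends the relative fundamental class of $(M,M^{(n-1)})$ to the boundary, and under $u$ this boundary becomes a boundary in $B\Gamma$. Since $u_*[M]=0$, the relative class $u_*([M,M^{(n-1)}])\in H_n(B\Gamma,B\Gamma^{(n-1)})$ lifts to a class in $H_n(B\Gamma)$ that maps to $0$, hence $u_*([M,M^{(n-1)}])$ comes from $H_n(B\Gamma^{(n-1)})\to$\,\ldots; more precisely one shows the image of $u|_{M^{(n-1)}}$ in $H_{n-1}(B\Gamma^{(n-1)}, B\Gamma^{(n-2)})$ is null, so one can homotope $u|_{M^{(n-1)}}$ off the top cells of $B\Gamma^{(n-1)}$, i.e.\ into $B\Gamma^{(n-2)}$, rel nothing. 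Then one uses the homotopy extension property of the CW pair $(M,M^{(n-1)})$ to extend this homotopy of $u|_{M^{(n-1)}}$ to a homotopy of $u$ on all of $M$; the new map $u'$ then satisfies $u'(M^{(n-1)})\subset B\Gamma^{(n-2)}$ while still being a classifying map, and automatically $u'(M)\subset B\Gamma^{(n-1)}$ by cellular approximation applied to the $n$-cells. This gives exactly $u(M,M^{(n-1)})\subset (B\Gamma^{(n-1)},B\Gamma^{(n-2)})$.

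The main obstacle I anticipate is controlling the homotopy of $u|_{M^{(n-1)}}$ into $B\Gamma^{(n-2)}$: deforming a map of an $(n-1)$-complex into the $(n-2)$-skeleton of a target is not automatic—it requires that the composite $M^{(n-1)}\to B\Gamma^{(n-1)}\to B\Gamma^{(n-1)}/B\Gamma^{(n-2)}\simeq\bigvee S^{n-1}$ be null-homotopic, and nullity in homology does not immediately give nullity in homotopy for maps out of an $(n-1)$-complex. The resolution is that we have freedom to modify $u$ by the cellular chain-level data: the obstruction to pushing into $B\Gamma^{(n-2)}$ is precisely a relative cohomology class, and the hypothesis $u_*[M]=0$ together with Poincar\'e duality on $M$ forces this class to vanish after possibly altering $u$ on the $(n-1)$-cells by elements of $\pi_{n-1}(B\Gamma^{(n-1)})$ coming from $H_{n-1}$. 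Thus the argument is really: (i) cellular approximation; (ii) analyze the degree-type obstruction on $(n-1)$-cells via the exact sequences; (iii) use $u_*[M]=0$ to kill it; (iv) apply HEP for $(M,M^{(n-1)})$ to globalize. Steps (i) and (iv) are routine; the heart is (ii)--(iii), where one must carefully track the relative fundamental class and exploit that $M$ is a closed oriented manifold so that the top-dimensional homological information is governed entirely by $[M]$.
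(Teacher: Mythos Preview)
The paper does not give its own proof of this proposition: it is quoted verbatim as \cite{BD1}, Lemma~3.5, with no argument supplied. So there is nothing in the present paper to compare your proposal against.

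On the merits of your proposal itself: the outline has the right shape (cellular approximation, obstruction to compressing $u|_{M^{(n-1)}}$ into $B\Gamma^{(n-2)}$, homotopy extension), but the crucial step~(ii)--(iii) is asserted rather than carried out. Two specific issues. First, your claim that ``the image of $u|_{M^{(n-1)}}$ in $H_{n-1}(B\Gamma^{(n-1)},B\Gamma^{(n-2)})$ is null'' is not a consequence of $u_*[M]=0$ as written: for a cellular $u$ that class is governed by the chain map $\tilde u_\#\colon C_{n-1}(\tilde M)\to C_{n-1}(E\Gamma)$ on $(n-1)$-cycles, which has no reason to vanish. Second, you correctly flag that homological vanishing does not give homotopical vanishing, but your proposed fix (``modify $u$ on the $(n-1)$-cells by elements of $\pi_{n-1}(B\Gamma^{(n-1)})$'') is not turned into an argument: you neither identify the obstruction group precisely nor explain how Poincar\'e duality and $u_*[M]=0$ kill the class. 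What is actually needed is a $\Z\Gamma$-chain-homotopy argument exploiting the acyclicity of $C_*(E\Gamma)$ to replace $\tilde u_\#$ by a chain-homotopic map vanishing in degrees $n$ and $n-1$, together with a realization step; or, equivalently, the explicit obstruction computation from the cited source. Steps~(i) and~(iv) are fine; the core step needs to be executed, not announced.
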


We recall that for a manifold to be spin is equivalent to the orientability in any of  the K-theories: complex $KU$, real $KO$, or  their connective covers $ku$ or $ko$ ~\cite{Ru}.

J. Rosenberg connected the realm of positive scalar curvature manifolds to the Novikov Higher Signature conjecture by proving the following~\cite{R1}:
\begin{thm}\label{Ros}
Suppose that the fundamental group $\Gamma$ of a positive scalar curvature spin manifold $M$ satisfies the Strong Novikov conjecture. Then $u_*([M]_{KO})=0$ where $u:M\to B\Gamma$ is a classifying map.
\end{thm}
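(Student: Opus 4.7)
The plan is to use the Dirac operator on the spin manifold $M$ together with the assembly map for the real $K$-theory of the group $C^*$-algebra. The Strong Novikov conjecture for $\Gamma$ asserts injectivity (rationally, or more precisely the appropriate form of injectivity) of the assembly map
\[
\mu \colon KO_*(B\Gamma) \to KO_*(C^*_r(\Gamma;\mathbb{R})).
\]
Since $M$ is spin, it carries a $KO$-orientation and hence a fundamental class $[M]_{KO}\in KO_n(M)$; what we want to show is that $u_*([M]_{KO})=0$ in $KO_n(B\Gamma)$. By injectivity of $\mu$, it suffices to show that $\mu(u_*([M]_{KO}))=0$ in $KO_n(C^*_r(\Gamma;\mathbb{R}))$.

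Next, I would identify $\mu(u_*([M]_{KO}))$ with the $C^*$-algebraic index of the Dirac operator on $M$ twisted by the Mishchenko bundle $\widetilde{M}\times_\Gamma C^*_r(\Gamma;\mathbb{R}) \to M$ associated with the classifying map $u$. This identification is the content of Rosenberg's description of the assembly map as a higher index: the composition of the $KO$-orientation class with the assembly map coincides with the index class of the twisted Dirac operator in $KO_n(C^*_r(\Gamma;\mathbb{R}))$.

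The core analytic step is then the Lichnerowicz--Schr\"odinger--Weitzenb\"ock formula. For the untwisted spin Dirac operator $D$ on $M$ one has
\[
D^2 = \nabla^*\nabla + \tfrac{1}{4} Sc_M,
\]
and a standard computation shows that after twisting with the flat Hilbert $C^*_r(\Gamma;\mathbb{R})$-module bundle coming from $u$, the curvature contribution of the twisting bundle vanishes, so the same formula persists for the twisted operator $D_u$. Since $Sc_M > 0$ on a compact manifold, $Sc_M \geq c > 0$ for some constant $c$, hence $D_u^2 \geq c/4$. This forces $D_u$ to be invertible as an operator on the Hilbert module of $L^2$-sections, and therefore its $KO$-theoretic index vanishes in $KO_n(C^*_r(\Gamma;\mathbb{R}))$.

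Combining the three steps: $\mu(u_*([M]_{KO})) = \mathrm{ind}(D_u) = 0$ by Lichnerowicz and positive scalar curvature, and then $u_*([M]_{KO}) = 0$ by the Strong Novikov hypothesis. The main obstacle, or rather the main technical input that I would rely on as a black box, is the identification of the image under $\mu$ of the $KO$-fundamental class pushed to $B\Gamma$ with the higher index of the twisted Dirac operator; this requires the formalism of Hilbert module bundles and the analytic assembly map, and is precisely where the spin hypothesis enters in an essential way.
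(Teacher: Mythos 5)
The paper does not prove this statement; it is quoted directly from Rosenberg's 1983 paper (cited as \cite{R1}), and your outline reproduces exactly Rosenberg's original argument — $KO$-fundamental class, assembly map to $KO_*(C^*_r(\Gamma;\mathbb{R}))$, identification with the Mishchenko--Fomenko index of the twisted Dirac operator, vanishing via the Lichnerowicz formula and positive scalar curvature, and injectivity of the assembly map. One small point to be precise about: the conclusion $u_*([M]_{KO})=0$ is an integral statement, so the Strong Novikov hypothesis must supply integral (not merely rational) injectivity of the real assembly map; with that understood, the argument is correct and is the same one the paper relies on by citation.
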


\

Below we slightly reformulate Theorem 1.3 from~\cite{SY}.
\begin{thm}[Schoen-Yau]\label{SY}
Suppose that a compact oriented $n$-manifold has 1-dimensional integral cohomology classes $\alpha_i$, $i=1,\dots,n-1$ with nontrivial cup product $\alpha_1\smile\dots\smile\alpha_{n-1}\ne 0$.
Then $M$ cannot carry a metric of positive scalar curvature.
\end{thm}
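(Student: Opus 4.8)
The plan is to run the Schoen--Yau minimal hypersurface induction, shrinking the dimension by one at each stage until a surface is reached. Assume $M$ carries a positive scalar curvature metric $g$; I will derive a contradiction. First I would reinterpret the hypothesis: by Poincar\'e duality, $\alpha_1\smile\dots\smile\alpha_{n-1}\ne0$ in $H^{n-1}(M;\Z)$ is equivalent to $[M]\frown(\alpha_1\smile\dots\smile\alpha_{n-1})\ne0$ in $H_1(M;\Z)$. Since $\alpha_1\in H^1(M;\Z)$ is represented by a map $M\to S^1$, the class $[M]\frown\alpha_1\in H_{n-1}(M;\Z)$ is a genuine integral homology class, so by the Federer--Fleming compactness theorem it is realized by an area-minimizing integral current $M_1$ in $M$ (a smooth, closed, oriented, two-sided hypersurface when $\dim M\le7$). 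Then I iterate: having produced $M_k$, choose an area-minimizing hypersurface $M_{k+1}\subset M_k$ in the homology class Poincar\'e dual to $\alpha_{k+1}|_{M_k}$.

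Two properties must be carried along the induction. First, \emph{positive scalar curvature descends}: whenever $\dim M_k\ge3$, the area-minimizer $M_k$ is a stable two-sided minimal hypersurface in $M_{k-1}$, and substituting the traced Gauss equation $Sc_{M_k}=Sc_{M_{k-1}}-2\,\mathrm{Ric}_{M_{k-1}}(\nu,\nu)-|A|^2$ into the second variation inequality yields $\int_{M_k}|\nabla\phi|^2+\tfrac12\int_{M_k}Sc_{M_k}\,\phi^2\ge\tfrac12\int_{M_k}Sc_{M_{k-1}}\,\phi^2>0$ for $\phi\not\equiv0$; hence $-\Delta_{M_k}+\tfrac12 Sc_{M_k}$ has positive first eigenvalue, and conformally rescaling by an appropriate power of its positive first eigenfunction produces an honest positive scalar curvature metric on $M_k$. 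Second, \emph{the cup product survives restriction}: using naturality of the cap product together with $i_*[M_{k+1}]=[M_k]\frown\alpha_{k+1}|_{M_k}$, the class $[M_{k+1}]\frown(\alpha_{k+2}\smile\dots\smile\alpha_{n-1})|_{M_{k+1}}$ pushes forward, along the chain of inclusions $M_{k+1}\subset\dots\subset M$, to $[M]\frown(\alpha_1\smile\dots\smile\alpha_{n-1})\ne0$, so it is nonzero, and Poincar\'e duality on $M_{k+1}$ then gives $(\alpha_{k+2}\smile\dots\smile\alpha_{n-1})|_{M_{k+1}}\ne0$; passing to a connected component on which this persists keeps $M_{k+1}$ closed, oriented, and two-sided (it is Poincar\'e dual to a one-dimensional class).

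After $n-2$ steps this produces a closed oriented surface $\surface$ carrying a positive scalar curvature metric, together with $\alpha_{n-1}|_\surface\ne0$ in $H^1(\surface;\Z)$. But on a surface positive scalar curvature is positive Gauss curvature, so Gauss--Bonnet (equivalently, the eigenvalue inequality above tested on $\phi\equiv1$) forces $\chi(\surface)>0$, i.e.\ $\surface\cong S^2$, whence $H^1(\surface;\Z)=0$ --- a contradiction. Therefore $M$ admits no positive scalar curvature metric.

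The genuinely hard point is the descent in high dimensions. For $\dim M\le7$ all of the minimizers above are smooth embedded hypersurfaces and what I described is the classical Schoen--Yau argument; but for $\dim M\ge8$ the area-minimizing current $M_1$ can carry a singular set of dimension up to $n-8$, and one must perform both the eigenvalue/conformal step and the homological bookkeeping on a stratified singular space, estimating the singular contributions so that positive scalar curvature still descends and the surviving cup product is still detected. Making this precise is exactly the technical content of the recent work of Schoen and Yau \cite{SY} being invoked here.
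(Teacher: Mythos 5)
The paper gives no proof of Theorem~\ref{SY}; it is stated as a reformulation of Theorem~1.3 of~\cite{SY} and cited as such. Your sketch is a faithful outline of the Schoen--Yau minimal hypersurface descent argument that underlies that reference: representing $\alpha_1$ by a stable area-minimizing hypersurface, using the rearranged second variation inequality to show positivity of the conformal Laplacian (and hence a conformal PSC metric) on the hypersurface, propagating the nontrivial cup product via the projection formula $i_*\bigl(x\frown i^*y\bigr)=i_*(x)\frown y$, and terminating at a surface where Gauss--Bonnet forces $H^1=0$. You also correctly identify that for $n\ge 8$ the minimizers may be singular and that controlling the descent through the singular strata is precisely the technical contribution of~\cite{SY}, which you would have to invoke rather than reprove. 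So the proposal is correct and matches the approach of the cited source; since the paper itself contains no argument, there is nothing to contrast it with beyond noting the citation.
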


\begin{prop}\label{iness}
A closed orientable $n$-manifold $M$ carrying a metric of positive scalar curvature with  $\pi_1(M)=\mathbb Z^m$  is inessential.
\end{prop}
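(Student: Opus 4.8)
The plan is to use Theorem~\ref{ref}: it suffices to show that $(u_M)_*([M])=0$ in $H_n(B\mathbb Z^m)=H_n(T^m)$, where $u_M:M\to T^m$ is a classifying map. Since $T^m$ is a smooth manifold, by the Thom–Pontryagin correspondence the bordism group $\Omega_n(T^m)$ maps onto the relevant piece of $H_n(T^m)$, and in fact $H_n(T^m)$ is generated by images of maps of coordinate subtori $T^k\hookrightarrow T^m$. Concretely, $H_n(T^m;\mathbb Z)$ is free of rank $\binom{m}{n}$ (zero if $n>m$), with basis the fundamental classes of the coordinate $n$-subtori, and these are detected by the cup products of the $n$ corresponding coordinate one-dimensional cohomology classes. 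So to prove $(u_M)_*[M]=0$ it is enough to show that for every collection of $n$ of the standard generators $t_1,\dots,t_n\in H^1(T^m;\mathbb Z)$, the pairing $\langle u_M^*(t_1\smile\cdots\smile t_n),[M]\rangle$ vanishes.

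The next step is to identify these pairings with cup products on $M$. Pulling back, $u_M^*(t_1\smile\cdots\smile t_n)=\alpha_1\smile\cdots\smile\alpha_n$ where $\alpha_i=u_M^*(t_i)\in H^1(M;\mathbb Z)$, and the pairing in question is exactly the evaluation of this degree-$n$ cup product on the fundamental class $[M]$, i.e. the coefficient of $\alpha_1\smile\cdots\smile\alpha_n$ as a multiple of the generator of $H^n(M;\mathbb Z)\cong\mathbb Z$ (using Poincar\'e duality and orientability). If this coefficient were nonzero for some choice of indices, then after renaming we would have $n$ one-dimensional integral classes $\alpha_1,\dots,\alpha_n$ on the $n$-manifold $M$ with $\alpha_1\smile\cdots\smile\alpha_n\ne 0$; in particular $\alpha_1\smile\cdots\smile\alpha_{n-1}\ne 0$. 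But then Theorem~\ref{SY} of Schoen and Yau says $M$ cannot carry a metric of positive scalar curvature, contradicting the hypothesis. Hence every such pairing vanishes, so $(u_M)_*[M]=0$ in $H_n(T^m)$, and $M$ is inessential by Theorem~\ref{ref}.

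**Where the work really sits.**
The homotopy-theoretic input — that $H_n(B\mathbb Z^m)$ is spanned by coordinate subtori and that the resulting linear functionals on it are precisely the cup-product pairings of pulled-back one-dimensional classes — is standard for the torus but should be spelled out, since it is the bridge that lets Theorem~\ref{SY} apply. The one genuine subtlety is the torsion-versus-nontorsion distinction: $H_n(T^m;\mathbb Z)$ is torsion-free, so a homology class in the torus is zero iff all its pairings with $H^n(T^m;\mathbb Z)$ vanish, and $H^n(T^m;\mathbb Z)$ is generated by cup products of one-dimensional classes; thus the cup-product test above is genuinely sufficient. The Schoen–Yau theorem is used as a black box, so the main obstacle is not analytic but bookkeeping: making sure the chosen classifying map $u_M$ is the one for which the pairings split along coordinate subtori, and confirming that $\binom{m}{n}$ cup-product conditions suffice to kill $(u_M)_*[M]$ entirely. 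Note the argument needs no hypothesis on spin or on dimension, which is why Proposition~\ref{iness} holds uniformly; the dimension/spin exceptions enter only later, in upgrading inessentiality to strong inessentiality.
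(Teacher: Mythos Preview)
Your proof is correct and follows essentially the same line as the paper's: both reduce via Theorem~\ref{ref} to showing $(u_M)_*[M]=0$ in $H_n(T^m)$, detect this class by pairing against $n$-fold cup products of one-dimensional classes, and invoke Schoen--Yau (Theorem~\ref{SY}) for the contradiction. The only cosmetic difference is that the paper packages the detection step as the existence of a coordinate projection $q:T^m\to T^n$ with $q_*u_*[M]\ne 0$ (citing an external proposition), whereas you spell out directly that $H_n(T^m;\mathbb Z)$ is free on coordinate $n$-subtori and dual to the monomial basis of $H^n(T^m;\mathbb Z)$; these are the same fact.
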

\begin{proof} Since the case $m<n$ is trivial, we assume that $m\ge n$.
Assume the contrary, $u_*([M])\ne 0$ in $H_*(T^m;\mathbb Z)$ where $u:M\to T^m$ is the classifying map. 
By Proposition 4.6 in~\cite{BD2} there is a projection onto the factor $q:T^m\to T^n$ such that $q_*u_*([M])\ne 0$.
hence $q_*u_*([M])=\ell[T^n]$ with $\ell\ne 0$.
Let $\beta_1,\dots,\beta_n$ be generators of $H^1(T^n;\mathbb Z)$ with $\beta_1\smile\dots\smile\beta_n\ne 0$. Denote by $\alpha_i=(qu)^*(\beta_i)$.
We show that $\alpha_1\smile\dots\smile\alpha_n\ne 0$ to get a contradiction with Schoen-Yau theorem. Note that
$$(qu)_*((\alpha_1\smile\dots\smile\alpha_n)\cap[M]))=(\beta_1\smile\dots\smile\beta_n)\cap q_*u_*([M])\ne 0.$$
Since $(qu)_*$ is an isomorphism of 0-dimensional homology groups, we obtain  $\alpha_1\smile\dots\smile\alpha_n\ne 0$.
\end{proof}

\begin{defn}
An $n$-manifold $M$  with fundamental group $\Gamma$ is called  {\em strongly inessential} if  its classifying map $u_M:M\to B\Gamma$ can be deformed into the $(n-2)$-skeleton  $B\Gamma^{(n-2)}$.
\end{defn}

\section{Spin and almost spin manifolds}

\begin{defn}
We call a discrete group $G$   $p$-{\em tame} if there is a finite covering $\beta:B'\to BG$ that induces  zero homomorphism
$$\beta^*:H^2(BG;\mathbb Z_p)\to H^2(B';\mathbb Z_p).$$
\end{defn}
EXAMPLE. The group $\mathbb Z^n$ is $p$-tame for all $p$. Moreover, any finitely generated abelian group is $p$-tame.
\begin{prop}\label{2-tame}
For any closed almost spin manifold $M$ with 2-tame fundamental group there is a finite cover $p:M'\to M$ with spin $M'$.
\end{prop}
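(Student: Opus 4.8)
The plan is to exploit the fact that on an almost spin manifold the obstruction to being spin is concentrated in the fundamental group: $w_2(M)$ is pulled back along a classifying map $u_M\colon M\to B\Gamma$, and being $2$-tame is exactly the hypothesis needed to trivialize such a class on a finite cover; a final double cover then repairs orientability. To begin, I would record the elementary observation that if $p\colon\widetilde M\to M$ is the universal covering then $T\widetilde M\cong p^*TM$, so $w_i(\widetilde M)=p^*w_i(M)$; since $\widetilde M$ is spin this gives $p^*w_2(M)=0$ in $H^2(\widetilde M;\mathbb Z_2)$ (and $p^*w_1(M)=0$, automatically, as $\widetilde M$ is simply connected).

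The key step is to deduce that $w_2(M)=u_M^*(\zeta)$ for some $\zeta\in H^2(B\Gamma;\mathbb Z_2)$. For this I would run the Serre spectral sequence of the fibration $\widetilde M\to M\xrightarrow{u_M}B\Gamma$ with $\mathbb Z_2$ coefficients. Since $\widetilde M$ is simply connected, $H^1(\widetilde M;\mathbb Z_2)=0$, so the entire row $q=1$ of $E_2$ vanishes; hence in total degree $2$ only $E_\infty^{2,0}$ and $E_\infty^{0,2}$ survive, no differential enters or leaves $E^{2,0}$, and $E_\infty^{2,0}=E_2^{2,0}=H^2(B\Gamma;\mathbb Z_2)$. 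The edge homomorphisms therefore yield an exact sequence
\[
0\to H^2(B\Gamma;\mathbb Z_2)\xrightarrow{\ u_M^*\ }H^2(M;\mathbb Z_2)\xrightarrow{\ p^*\ }H^2(\widetilde M;\mathbb Z_2),
\]
and since $p^*w_2(M)=0$ the class $\zeta$ exists.

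Finally I would invoke $2$-tameness: choose a finite covering $\beta\colon B'\to B\Gamma$ with $\beta^*=0$ on $H^2(-;\mathbb Z_2)$; as a covering of an aspherical space, $B'$ is a $B\Gamma'$ for a finite-index subgroup $\Gamma'\le\Gamma$. Let $q_1\colon M_1\to M$ be the finite covering obtained by pulling $\beta$ back along $u_M$ — connected because $(u_M)_*$ is an isomorphism — with classifying map $u_1\colon M_1\to B'$ satisfying $\beta\circ u_1\simeq u_M\circ q_1$. Then, by naturality of $w_2$, $w_2(M_1)=q_1^*w_2(M)=q_1^*u_M^*(\zeta)=u_1^*\beta^*(\zeta)=0$. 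Passing, if necessary, to the orientation double cover $M'\to M_1$ (or taking $M'=M_1$ when $M_1$ is already orientable) produces an orientable manifold with $w_2(M')=0$, i.e.\ a spin manifold, and $M'\to M$, being a composite of finite coverings, is a finite covering.

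I do not anticipate a genuine obstacle. The one substantive point is the middle step, identifying $w_2(M)$ as a class coming from $B\Gamma$, and even that reduces to the standard fact that simple connectedness of $\widetilde M$ forces $H^1(\widetilde M;\mathbb Z_2)=0$ and hence collapses the spectral sequence in the relevant range; everything else is naturality of Stiefel–Whitney classes under covering maps together with the definition of $2$-tameness.
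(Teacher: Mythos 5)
Your proof is correct, and it reaches the same key conclusion as the paper — that $w_2(M)$ is pulled back from $B\Gamma$ — but by a genuinely different mechanism. The paper observes that since $\widetilde M$ is spin, $w_2(M)$ evaluates trivially on spherical classes, feeds this into the five-term sequence $\pi_2(M)\to H_2(M)\to H_2(\Gamma)\to 0$, and then chases the universal coefficient theorem ladder to manufacture a class $\omega\in H^2(B\Gamma;\Z_2)$ with $u_M^*\omega=w_2(M)$. You instead run the cohomology Serre spectral sequence of $\widetilde M\to M\to B\Gamma$ with $\Z_2$ coefficients and use the vanishing of the $q=1$ row (from $H^1(\widetilde M;\Z_2)=0$) to get $E_\infty^{1,1}=0$, $E_\infty^{2,0}=H^2(B\Gamma;\Z_2)$, and hence exactness of $0\to H^2(B\Gamma;\Z_2)\xrightarrow{u_M^*}H^2(M;\Z_2)\xrightarrow{p^*}H^2(\widetilde M;\Z_2)$; that exactness at the middle term is exactly what is needed, even though $p^*$ need not be surjective. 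The spectral sequence route is cleaner and avoids the UCT diagram chase; the paper's route is more elementary in that it uses only the Hopf exact sequence and UCT naturality. One further point in your favor: the paper's proof as written only kills $w_2$, whereas \emph{spin} also requires $w_1=0$; your final step of passing to the orientation double cover (which has $w_1=0$ and, by naturality, still has $w_2=0$) explicitly closes this, making the statement literally true without a hidden orientability hypothesis.
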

\begin{proof} Let $G=\pi_1(M)$ and let $u_M:M\to BG$ be a classifying map.
Let $p:M\to M$ be the pull-back of $\beta:B'\to BG$ with respect to $u_M$. Then for the Stiefel-Whitney classes we have $w_2(M')=p^*(w_2)$.
It suffices to show that $w_2(M)=u_M^*(\omega)$ for some $\omega\in H^*(BG;\mathbb Z_2)$, then we obtain $w_2(M')=(u_M')^*\beta^*(\omega)=0$.
Since the universal cover $\tilde M$ is spin, it follows that the evaluation of $w_2(M)$ on every spherical cycle is trivial.
In view of the short exact
sequence
$$ \pi_2(M)\to H_2(M)\to H_2(G)\to 0$$ it follows that the
homomorphism $-\cap w_2(M):H_2(M)\to\Z_2$ lies in the image of the
homomorphism $Hom(H_2(G),\Z_2)\to Hom(H_2(M),\Z_2)$. Thus, in the
diagram generated by the universal coefficient theorem exact
sequences
$$
\begin{CD}
0\to Ext(H_1(M),\Z_2) @>i>> H^2(M;\Z_2) @>j>> Hom(H_2(M),\Z_2)\to 0\\
 @Au_1^*=AA @Au_M^*AA @Au^*AA \\
0\to Ext(H_1(G),\Z_2) @>i'>> H^2(G,\Z_2) @>j'>> Hom(H_2(G),\Z_2)\to 0\\
\end{CD}
$$
$j(w_2(M))=u^*(\phi)$ for some $\phi$. Then the diagram chasing implies that $w_2(M)=u_M^*(\omega)$ for $\omega=i'(\alpha)+\bar\phi$ where $\bar\phi$ is arbitrary with $j'(\bar\phi)=\phi$ and $\alpha=(u_1^*)^{-1}(\bar\alpha)$ where $\bar\alpha=i^{-1}(w_2(M)-u_M^*(\bar\phi))$.
\end{proof}

\begin{lemma}\label{2-obstruction}
Suppose that for a closed spin
$n$-manifold $M$, $n>4$, there is a map $u: M\to B\Gamma^{(n-1)}$ that classify its universal cover and has the properties: 
$u(M^{(n-1)})\subset B\Gamma^{(n-2)}$ and $j_*u_*([M]_{ko})=0$ where $j:B\Gamma^{(n-1)}\to B\Gamma^{(n-1)}/B\Gamma^{(n-2)}$ is the qoutient map.
Then $M$ is strongly inessential. 
\end{lemma}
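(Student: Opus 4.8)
The plan is to upgrade the bordism-theoretic vanishing hypothesis into an actual deformation of $u$ to the $(n-2)$-skeleton. The map $u\colon M\to B\Gamma^{(n-1)}$ with $u(M^{(n-1)})\subset B\Gamma^{(n-2)}$ gives, by passing to the quotient, a map $\bar u\colon M/M^{(n-1)}\to B\Gamma^{(n-1)}/B\Gamma^{(n-2)}$. Since $M$ is a closed $n$-manifold, $M/M^{(n-1)}$ is a wedge of $n$-spheres (one for each top cell), so $\bar u$ represents an element of $\pi_n(B\Gamma^{(n-1)}/B\Gamma^{(n-2)})$ after collapsing all but one top cell — more efficiently, the composite $j\circ u$ factors as $M\to M/M^{(n-1)}\xrightarrow{\bar u} B\Gamma^{(n-1)}/B\Gamma^{(n-2)}$, and the image of the fundamental class $u_*([M])$ under $j_*$ lives in $H_n(B\Gamma^{(n-1)}/B\Gamma^{(n-2)})$. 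The hypothesis $j_*u_*([M]_{ko})=0$ in $ko_n$ is what we will convert, via \propref{pi-ko-iso}, into the vanishing of a stable homotopy obstruction.

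First I would identify the relevant obstruction. A map $u\colon M\to B\Gamma^{(n-1)}$ with $u(M^{(n-1)})\subset B\Gamma^{(n-2)}$ can be homotoped (rel $M^{(n-1)}$, or after a preliminary homotopy of $M^{(n-1)}$ into $B\Gamma^{(n-2)}$) down to the $(n-2)$-skeleton precisely when the only obstacle is on the top cells of $M$: the attaching information assembles into a single class in $\pi_n(B\Gamma^{(n-1)}/B\Gamma^{(n-2)})$, or rather in the coinvariants $\pi_n^s(\cdots)$ once one stabilizes (the dimension hypothesis $n>4$ and the connectivity of the relevant quotient let one replace $\pi_n$ by $\pi_n^s$ via Freudenthal, and \theoref{h-excision} handles the comparison between the pair $(B\Gamma^{(n-1)},B\Gamma^{(n-2)})$ and the quotient $B\Gamma^{(n-1)}/B\Gamma^{(n-2)}$ in the range we need). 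Then \propref{pi-ko-iso}, applied with $K=B\Gamma$ so that $K/K^{(n-2)}$ has $B\Gamma^{(n-1)}/B\Gamma^{(n-2)}$ as its bottom piece, tells us that $\pi_n^s(B\Gamma^{(n-1)}/B\Gamma^{(n-2)})\to ko_n(B\Gamma^{(n-1)}/B\Gamma^{(n-2)})$ is an isomorphism. Hence the stable obstruction to pushing $u$ into the $(n-2)$-skeleton is detected faithfully in connective $ko$-homology, and the hypothesis $j_*u_*([M]_{ko})=0$ kills it.

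The remaining step is purely homotopy-theoretic bookkeeping: once the single stable obstruction class vanishes, one constructs the homotopy of $u$ to a map into $B\Gamma^{(n-2)}$ cell by cell — it is automatic below dimension $n$ since $u$ already lands in $B\Gamma^{(n-2)}$ on $M^{(n-1)}$, and on each top $n$-cell the null-homotopy of the attaching obstruction in $\pi_{n-1}(B\Gamma^{(n-2)})$ (equivalently the lift across $B\Gamma^{(n-2)}\to B\Gamma^{(n-1)}$) exists exactly because the assembled class in $\pi_n(B\Gamma^{(n-1)}/B\Gamma^{(n-2)})$ was zero. This produces $u'\colon M\to B\Gamma^{(n-2)}$ homotopic to $u$ in $B\Gamma$, so $u$ classifies the same universal cover and $M$ is strongly inessential.

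The main obstacle I anticipate is the passage from the \emph{single} top-cell obstruction to the \emph{collective} one — i.e. ensuring that vanishing of $j_*u_*([M]_{ko})$ (a homology class built from the fundamental class, hence a sum over all top cells) really implies that $u$ can be deformed simultaneously off \emph{all} top cells, not merely that each individual cell's obstruction is killed by some homotopy that might disturb the others. This is where the coinvariant subtlety flagged in the paper's Question lurks: one needs the map $\pi_n^s(E\Gamma^{(n-1)})_\Gamma\to\pi_n^s(B\Gamma^{(n-1)})$-type comparison, and it is the spin hypothesis together with \propref{pi-ko-iso} (which has no such coinvariant defect, since $ko$-homology is already a homology theory computed on $B\Gamma$ directly) that lets us sidestep it — we work in $ko$-homology of $B\Gamma$ from the start rather than trying to descend a stable homotopy class, so the fundamental class and its $ko$-orientation are genuinely global objects and the vanishing is exactly the vanishing of the global obstruction.
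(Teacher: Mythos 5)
Your proposal follows the same overall strategy as the paper (primary obstruction identified via Poincar\'e duality with local coefficients; excision/Freudenthal to pass to the quotient $B\Gamma^{(n-1)}/B\Gamma^{(n-2)}$; \propref{pi-ko-iso} to convert the $\pi_n^s$-class into a $ko_n$-class; then the hypothesis $j_*u_*([M]_{ko})=0$ finishes it), so the architecture is right. Where you go astray is in the final paragraph, in your diagnosis of how the coinvariant subtlety is actually avoided.

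The obstruction cocycle is a priori a class in $H^n(M;\pi_n(B\Gamma^{(n-1)},B\Gamma^{(n-2)}))$ with \emph{twisted} coefficients, and Poincar\'e duality sends it into $H_0$ with the same coefficients, i.e.\ into the group of $\Gamma$-coinvariants $\pi_n(E\Gamma^{(n-1)},E\Gamma^{(n-2)})_\Gamma$. This descent to coinvariants is not optional and is not sidestepped by working in $ko$-homology of $B\Gamma$: one really must descend a stable homotopy class. The reason this is harmless here is the hypothesis $u(M^{(n-1)})\subset B\Gamma^{(n-2)}$, which lets one replace the pair by the quotient $E\Gamma^{(n-1)}/E\Gamma^{(n-2)}$ --- a wedge of $(n-1)$-spheres freely permuted by $\Gamma$. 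For such a wedge, in the stable range $n>4$, the coinvariant map $\pi_n^s(E\Gamma^{(n-1)}/E\Gamma^{(n-2)})_\Gamma\to\pi_n^s(B\Gamma^{(n-1)}/B\Gamma^{(n-2)})$ is visibly an isomorphism, because both sides are just direct sums of $\pi_1^s=\Z_2$ indexed by (free $\Gamma$-set of cells)/(orbit set of cells). By contrast, the problematic map $\xi:\pi^s_n(E\Gamma^{(n-1)})_\Gamma\to\pi^s_n(B\Gamma^{(n-1)})$ flagged in the paper's Question involves the full skeleta, which are not wedges of spheres; that is the map one cannot control, and it is the quotient trick --- not the spin/$ko$ hypothesis --- that avoids it. The spin hypothesis and \propref{pi-ko-iso} enter only afterwards, to identify the now well-defined stable obstruction $\bar u_*(1)\in\pi_n^s(B\Gamma^{(n-1)}/B\Gamma^{(n-2)})$ with $j_*u_*([M]_{ko})$, which vanishes by hypothesis. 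You should also fix the minimal CW structure of $M$ with a single top cell at the outset so that $M/M^{(n-1)}=S^n$ and the ``collective obstruction'' you worry about is literally a single element; that concern is resolved by the cell structure choice plus Poincar\'e duality, not by a separate argument.
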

\begin{proof}
We may assume that $M$ has a CW complex structure with one $n$-dimensional cell. Let $\psi:D^n\to M$ be its characteristic map.
By Proposition~\ref{ref2} we may assume that the classifying map $u$ satisfies the condition $u(M^{(n-1)})\subset B\Gamma^{(n-2)}$.
Note that  the homotopy groups of the $(n-1)$-homotopy  fiber $F$ of the inclusion $B\Gamma^{(n-2)}\to B\Gamma^{(n-1)}$ equal the relative $n$-homotopy 
groups, $\pi_{n-1}(F)=\pi_n(B\Gamma^{(n-1)},B\Gamma^{(n-2)})$.
Then the first and the only obstruction to deform $u$ to $B\Gamma^{(n-2)}$ is defined by the cocycle
$c_u:C_n(M)\to\pi_{n-1}(F)$ represented by the composition $$C_n(M)=\pi_n(D^n,\partial D^n)\stackrel{\psi_*}\to\pi_n(M,M^{(n-1)})\stackrel{u_*}\to\pi_n(B\Gamma^{(n-1)},B\Gamma^{(n-2)})$$ with the cohomology class
$o_u=[c_u]\in  H^n(M;\pi_n(B\Gamma^{(n-1)},B\Gamma^{(n-2)}))$. By the Poincare duality with local coefficients, $o_u$ is dual
to the homology class $$PD(o_u)\in H_0(M;\pi_n(B\Gamma^{(n-1)},B\Gamma^{(n-2)}))=\pi_n(B\Gamma^{(n-1)},B\Gamma^{(n-2)})_{\Gamma}$$
represented by $q_*u_*\psi_*(1)$ where $$q_*:\pi_n(B\Gamma^{(n-1)},B\Gamma^{(n-2)})\to\pi_n(B\Gamma^{(n-1)},B\Gamma^{(n-2)})_{\Gamma}$$ 
is the projection onto the group of coinvariants. Note that $\pi_n(B\Gamma^{(n-1)},B\Gamma^{(n-2)})=\pi_n(E\Gamma^{(n-1)},E\Gamma^{(n-2)})$.
Since $n\le 2(n-2)-1$, by 
Theorem~\ref{h-excision}, $$\pi_n(E\Gamma^{(n-1)},E\Gamma^{(n-2)})=\pi_n(E\Gamma^{(n-1)}/E\Gamma^{(n-2)}).$$
It is easy to see that $$\pi_n(E\Gamma^{(n-1)}/E\Gamma^{(n-2)})_\Gamma=\pi_n(B\Gamma^{(n-1)}/B\Gamma^{(n-2)}).$$

Denote by $\bar u:M/M^{(n-1)}=S^n\to B\Gamma/B\Gamma^{(n-2)}$ the induced map. The commutative diagram
$$
\begin{CD}
\pi_n(M,M^{(n-1)}) @>u_*>>\pi_n(B\Gamma^{(n-1)},B\Gamma^{(n-2)}) @>q_*>> \pi_n(B\Gamma^{(n-1)},B\Gamma^{(n-2)})_\Gamma\\
@Ai_*AA @. @V\cong V\bar p_*V\\
\pi_n(D^n/\partial D^n) @>=>>\pi_n(M/M^{(n-1)}) @>\bar u_*>>  \pi_n(B\Gamma^{(n-1)}/B\Gamma^{(n-2)})\\
\end{CD}
$$
implies that
$\bar u_*(1)=\bar p_*q_*u_*\psi_*(1).$  Thus, $\bar u_*(1)=0$ if and only if the obstruction $o_u$ vanishes.

We show that $\bar u_*(1)=0$.
The restriction $n>4$ and Proposition~\ref{pi-ko-iso} imply that $\bar
u_*(1)$ survives to the $ko$-homology group:
$$
\begin{CD}
\pi_n(B\Gamma^{(n-1)}/B\Gamma^{(n-2)}) @>\cong >> \pi_n^s(B\Gamma^{(n-1)}/B\Gamma^{(n-2)}) @>\cong >>ko_n(B\Gamma^{(n-1)}/B\Gamma^{(n-2)}).\\
\end{CD}
$$

Then the  commutative diagram 
$$
\begin{CD}
\pi_n(S^n)  @>\cong>> ko_n(S^n)\\
@V\bar u_*VV @V\bar u_*VV \\
\pi_n(B\Gamma^{(n-1)}/B\Gamma^{(n-2)}) @>\cong>> ko_n(B\Gamma^{(n-1)}/B\Gamma^{(n-2)})
\end{CD}
$$
implies that $\bar u_*(1)= 0$ for $ko_n$ if and only if $\bar u_*(1)=0$ for $\pi_n$. 

From the assumption and the diagram defined by the quotient maps $j':M\to M/M^{(n-1)}=S^n$ and $j:B\Gamma^{(n-1)}\to B\Gamma^{(n-1)}/B\Gamma^{(n-2)}$
$$
\begin{CD}
ko_n(M) @>u_*>>ko_n(B\Gamma^{(n-1)})\\
@V{j'_*}VV @V{j_*}VV\\
ko_n(S^n) @>\bar u_*>>ko_n(B\Gamma^{(n-1)}/B\Gamma^{(n-2)})\\
\end{CD}
$$
it follows that $\bar u_*(1)=\bar u_*j'_*([M]_{ko})=j_*u_*([M]_{ko})=0$.
\end{proof}

\section{K-theory Injectivity Conditions} 

The following is well-known (see 4C, \cite{Ha}).
\begin{prop}\label{per} Let $X$ be an $(n-1)$-connected $(n+1)$-dimensional CW complex.
Then $X$ is homotopy equivalent to the wedge of spheres of dimensions $n$ and $n+1$
together with the Moore spaces $M(\Z_m, n)$.
\end{prop}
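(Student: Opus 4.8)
The plan is to build an explicit wedge $W$ of spheres and Moore spaces having the same homology as $X$, to construct a map $f\colon W\to X$ inducing an isomorphism on integral homology in every degree, and then to invoke the homology Whitehead theorem — legitimate because $n\ge2$, so $W$ and $X$ are both simply connected — to upgrade $f$ to a homotopy equivalence.

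First I would record the homological input. Since $X$ is $(n-1)$-connected, the Hurewicz theorem gives $\pi_n(X)\cong H_n(X)$. Since $\dim X\le n+1$, the cellular chain complex of $X$ has $\partial_{n+2}=0$, so $H_{n+1}(X)=\ker\partial_{n+1}$ is a subgroup of the free abelian group $C_{n+1}(X)$ and hence itself free abelian; also $\tilde H_i(X)=0$ for $i\notin\{n,n+1\}$. Writing $H_n(X)\cong\Z^a\oplus\bigoplus_{i=1}^{k}\Z_{m_i}$ and $H_{n+1}(X)\cong\Z^b$, I set
$$W=\Big(\bigvee^{a}S^n\Big)\vee\Big(\bigvee_{i=1}^{k}M(\Z_{m_i},n)\Big)\vee\Big(\bigvee^{b}S^{n+1}\Big),$$
which has exactly the claimed shape and satisfies $\tilde H_*(W)\cong\tilde H_*(X)$.

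Next I would construct $f$ in two stages. The sub-wedge $V=(\bigvee^{a}S^n)\vee(\bigvee_{i}M(\Z_{m_i},n))$ is a Moore space of type $(H_n(X),n)$ whose $n$-skeleton is a wedge of $n$-spheres and with $\pi_n(V)\cong H_n(X)$ (here $n\ge2$ is needed). Map this $n$-skeleton into $X$ by sending the $\lambda$-th sphere to a representative of the $\lambda$-th generator in a fixed cyclic decomposition of $\pi_n(X)\cong H_n(X)$; for each summand $M(\Z_{m_i},n)$ the attaching map of its extra $(n+1)$-cell is the degree-$m_i$ map into the relevant sphere, hence represents $m_i$ times an element of order $m_i$ in $\pi_n(X)$, so it is nullhomotopic there and the map extends over that cell. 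This produces $f_0\colon V\to X$ inducing an isomorphism on $H_i$ for all $i\le n$. For the remaining spheres I would use that the Hurewicz homomorphism $\pi_{n+1}(X)\to H_{n+1}(X)$ is surjective — the epimorphism clause of the Hurewicz theorem for the $(n-1)$-connected $X$, obtained from the relative Hurewicz theorem applied to the pair formed by the mapping cylinder of a map $\bigvee S^n\to X$ onto a generating set of $\pi_n(X)$ together with its subcomplex $\bigvee S^n$, this pair being $n$-connected. Since $H_{n+1}(X)$ is free, a basis lifts along this surjection, giving maps $g_1,\dots,g_b\colon S^{n+1}\to X$ whose fundamental classes form a basis of $H_{n+1}(X)$. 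Then $f=f_0\vee g_1\vee\cdots\vee g_b\colon W\to X$ is an isomorphism on $H_n$ (through $f_0$), on $H_{n+1}$ (through the $g_i$), and — trivially — in all other degrees, hence an isomorphism on all of $\tilde H_*$.

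The homology Whitehead theorem then makes $f$ a homotopy equivalence $W\simeq X$, which is the assertion. I expect the only genuinely non-formal step to be the top-dimensional one — producing maps $S^{n+1}\to X$ spanning $H_{n+1}(X)$ — which rests on the surjectivity of the degree-$(n+1)$ Hurewicz map together with the freeness of $H_{n+1}(X)$; it is precisely in that freeness, and in the vanishing $\tilde H_i(X)=0$ for $i>n+1$, that the hypothesis $\dim X\le n+1$ enters. As an alternative route one may instead replace $X$ by a homotopy-equivalent complex with cells only in dimensions $0,n,n+1$ (a minimal cell structure), present it as the mapping cone of a map $\bigvee S^n\to\bigvee S^n$, identify that map with an integer matrix, diagonalize the matrix via Smith normal form using self-homotopy-equivalences of wedges of $n$-spheres, and read off the decomposition from the mapping cone of the diagonalized map.
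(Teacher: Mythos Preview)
Your argument is correct. Note, however, that the paper does not actually supply a proof of this proposition: it simply records the statement as well known and refers to Hatcher, Section~4C. So there is no ``paper's own proof'' to compare against beyond that citation.

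For what it is worth, your main line (build the model wedge $W$, map it to $X$ realizing generators of $\pi_n$ and lifts of a basis of $H_{n+1}$ through the surjective Hurewicz map, then invoke the homology Whitehead theorem) is one of the two standard arguments, and your justification of the surjectivity $\pi_{n+1}(X)\twoheadrightarrow H_{n+1}(X)$ via relative Hurewicz on the mapping-cylinder pair is the right way to make that step honest. The alternative you sketch at the end --- pass to a minimal CW model with cells only in dimensions $0,n,n+1$ and put the single cellular boundary map into Smith normal form --- is closer in spirit to Hatcher's treatment in~4C (minimal cell structures and homology decompositions) and has the minor advantage of avoiding the degree-$(n+1)$ Hurewicz step entirely. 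Both routes are standard and both use the hypothesis $\dim X\le n+1$ in the same essential way: to force $H_{n+1}(X)$ to be free and $\tilde H_i(X)$ to vanish for $i>n+1$. One tacit assumption worth flagging is $n\ge 2$, needed for simple connectivity and for Moore spaces $M(\Z_m,n)$ to behave as expected; this is harmless in the paper's applications, where $n>4$.
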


We consider the following condition on K-theory of a group $\Gamma$ which appear in our proof of the Strong Gromov Conjecture.

\

(*) There is a classifying CW-complex  $B\Gamma$ such that the inclusion homomorphism
$$
(\phi_n)_*:  KO_*(B\Gamma^{(n)})\to KO_*(B\Gamma)
$$
is injective for all $n>4$.

\

($I$) There is a classifying CW-complex  $B\Gamma$ such that the inclusion homomorphism
$$
(\phi_n)_*:  KO_*(B\Gamma^{(n)})\to KO_*(B\Gamma)
$$
restricted to the image of $KO_*(B\Gamma^{(n-1)})$ is injective for all $n>4$.

\

($\bar I$) There is a classifying CW-complex  $B\Gamma$ such that the inclusion homomorphism
$$
(\bar\phi_n)_*:  KO_*(B\Gamma^{(n)}/B\Gamma^{(n-2)})\to KO_*(B\Gamma/B\Gamma^{(n-2)})
$$
restricted to the image of $KO_*(B\Gamma^{(n-1)})$ is injective for all $n>4$.

\

($\bar I_{(2)}$) There is a classifying CW-complex  $B\Gamma$ such that the inclusion homomorphism
$$
(\bar\phi_n)_*:  KO_*(B\Gamma^{(n)}/B\Gamma^{(n-2)})\otimes\Z_{(2)}\to KO_*(B\Gamma/B\Gamma^{(n-2)})\otimes\Z_{(2)}
$$
restricted to the image of $KO_*(B\Gamma^{(n-1)})\otimes\Z_{(2)}$ is injective for all $n>4$.

\begin{prop}
There are implications
$$
(^*) \Rightarrow I\Rightarrow \bar I\Rightarrow \bar I_{(2)}.
$$
\end{prop}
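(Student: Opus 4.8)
The three implications are all formal consequences of the definitions plus exactness, so I would prove them in order, handling each as a short diagram-chasing argument.

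The first implication $(^*) \Rightarrow I$ is immediate: if $(\phi_n)_*\colon KO_*(B\Gamma^{(n)})\to KO_*(B\Gamma)$ is injective on all of $KO_*(B\Gamma^{(n)})$, then in particular it is injective on the subgroup $\operatorname{Im}\bigl(KO_*(B\Gamma^{(n-1)})\to KO_*(B\Gamma^{(n)})\bigr)$, which is exactly what $I$ asserts. The same choice of classifying complex $B\Gamma$ works, so nothing further is needed.

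For $I \Rightarrow \bar I$, fix the classifying complex $B\Gamma$ furnished by $I$. I would consider, for each $n>4$, the commutative square whose horizontal arrows are the quotient maps $B\Gamma^{(n)}\to B\Gamma^{(n)}/B\Gamma^{(n-2)}$ and $B\Gamma\to B\Gamma/B\Gamma^{(n-2)}$, and whose vertical arrows are the inclusions. Applying $KO_*$ gives a commutative square of abelian groups. Now take a class $x\in\operatorname{Im}\bigl(KO_*(B\Gamma^{(n-1)})\to KO_*(B\Gamma^{(n)})\bigr)$ lying in $KO_*(B\Gamma^{(n)}/B\Gamma^{(n-2)})$ (after the quotient map) and suppose its image in $KO_*(B\Gamma/B\Gamma^{(n-2)})$ vanishes. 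The long exact sequences of the pairs $(B\Gamma^{(n)},B\Gamma^{(n-2)})$ and $(B\Gamma,B\Gamma^{(n-2)})$, together with the fact that $B\Gamma^{(n-2)}$ has the same $KO$-homology inside $B\Gamma^{(n)}$ as inside $B\Gamma$ through the relevant degrees (they share the same $(n-2)$-skeleton), let me transfer the vanishing back to a statement about the absolute groups: the image of the corresponding absolute class in $KO_*(B\Gamma)$ lies in the image of $KO_*(B\Gamma^{(n-2)})$, hence its $\operatorname{Im}(KO_*(B\Gamma^{(n-1)}))$-component maps to zero in $KO_*(B\Gamma)$, and $I$ forces it to be zero; chasing forward again shows $x=0$. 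This is the step where a little care is required — matching up the two exact sequences and keeping track of which subquotient each class lives in — so I expect it to be the main obstacle, though it is still routine.

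Finally, $\bar I \Rightarrow \bar I_{(2)}$ follows because localization at $(2)$ is exact: tensoring the injective map $(\bar\phi_n)_*$ (restricted to the image of $KO_*(B\Gamma^{(n-1)})$) with the flat $\Z$-module $\Z_{(2)}$ preserves injectivity, and tensoring commutes with taking the image of a homomorphism, so the restriction of $(\bar\phi_n)_*\otimes\Z_{(2)}$ to $\operatorname{Im}\bigl(KO_*(B\Gamma^{(n-1)})\otimes\Z_{(2)}\bigr)$ is again injective. The same $B\Gamma$ works throughout, so the chain of implications is complete.
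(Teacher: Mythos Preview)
Your proposal is correct and follows the same route as the paper: the first and third implications are immediate (flatness of $\Z_{(2)}$ for the last), and for $I\Rightarrow\bar I$ both you and the paper use the ladder of long exact sequences for the pairs $(B\Gamma^{(n)},B\Gamma^{(n-2)})$ and $(B\Gamma,B\Gamma^{(n-2)})$, lift to an absolute class $a$, subtract a preimage $i'_*(b')$ coming from $KO_*(B\Gamma^{(n-2)})$, and apply $I$ to the difference. Your phrase ``$\operatorname{Im}(KO_*(B\Gamma^{(n-1)}))$-component'' is loose---there is no splitting available---but the intended subtraction argument is exactly the paper's.
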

\begin{proof}
(*) $\Rightarrow I$. Obvious.

\

$I \Rightarrow\bar I$. Consider the commutative diagram defined by exact sequence of pairs
$$
\begin{CD}
KO_*(B\Gamma^{(n-2)}) @>i'_*>> KO_*(B\Gamma^{(n)}) @>j_*'>> KO_*(B\Gamma^{(n)}/B\Gamma^{(n-2)})\\
@V=VV @V(\phi_n)_*VV @V(\bar\phi_n)_*VV\\
KO_*(B\Gamma^{(n-2)}) @>i_*>> KO_*(B\Gamma) @>j_*>> KO_*(B\Gamma/B\Gamma^{(n-2)})\\
\end{CD}
$$
Suppose that $a\in KO_*(B\Gamma^{(n)})$ lies in the image of $KO_*(B\Gamma^{(n-1)})$ and
$(\bar\phi_n)_* (j'_*(a))=0$. We need to show that $j'_*(a)=0$. 
By exactness, there is $b'$ such that $i_*(b')=(\phi_n)_*(a)$. Then $a-i'_*(b')$ lies in the image of $KO_*(B\Gamma^{(n-1)})$ . Since $(\phi_n)_*(a-i'_*((b'))=0$, by the condition $I$, $a=i_*'(b')$. By exactness, $j'_*(a)=0$.

\

$\bar I \Rightarrow\bar I_{(2)}$. Straightforward.
\end{proof}

\

\begin{prop}\label{KO-iness}
Suppose that for a group $\Gamma$ satisfying the condition $\bar I$
a classifying map $u:M\to B\Gamma$  of a closed spin $n$-manifold $M$ takes the KO fundamental class to 0.
Then $M$ is inessential.
\end{prop}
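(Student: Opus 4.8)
The strategy is to reduce \propref{KO-iness} to \lemref{2-obstruction} in disguise: show that condition $\bar I$ forces the quotient image of the $ko$-fundamental class to vanish, so the manifold is not merely inessential but essentially set up for strong inessentiality. First I would invoke \theoref{ref}: it suffices to show $(u_M)_*([M])=0$ in $H_n(B\Gamma)$. Since $M$ is spin it carries a $KO$-fundamental class $[M]_{KO}\in KO_n(M)$, and by hypothesis $u_*([M]_{KO})=0$ in $KO_n(B\Gamma)$. Pushing down to the connective cover via $per$, we get a class $u_*([M]_{ko})\in ko_n(B\Gamma)$ that maps to $0$ in $KO_n(B\Gamma)$. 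The first task is therefore to control the kernel of $per_*: ko_n(B\Gamma)\to KO_n(B\Gamma)$, which in low degrees is detected by negative-dimensional cells of $KO$; here \propref{per} and \propref{pi-ko-iso} are the tools, since $ko$ and $KO$ (and $\pi^s$) all agree on spaces of the form $K/K^{(n-2)}$.

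The heart of the argument: push $u_*([M]_{ko})$ into $ko_n(B\Gamma/B\Gamma^{(n-2)})$ under the quotient map $j_*$. On this quotient, by \propref{pi-ko-iso}, $ko_n$ agrees with $KO_n$ (for $n>4$, using that $B\Gamma/B\Gamma^{(n-2)}$ is $(n-3)$-connected so the relevant piece looks like a wedge of spheres and Moore spaces in dimensions $\ge n-1$). Hence the vanishing $u_*([M]_{KO})=0$ in $KO_n(B\Gamma)$, when transported through the commutative square relating $KO_n(B\Gamma)$, $KO_n(B\Gamma/B\Gamma^{(n-2)})$ and their $ko$-counterparts, gives that the image of $u_*([M]_{ko})$ in $ko_n(B\Gamma/B\Gamma^{(n-2)})$ dies after applying $(\bar\phi_n)_* \circ j_*$. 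Now I need a cellular classifying map $u$ with $u(M)\subset B\Gamma^{(n)}$ — in fact $u(M)\subset B\Gamma^{(n-1)}$ is available since $M$ can be given a CW structure with a single $n$-cell and the classifying map pushed to the $(n-1)$-skeleton off that cell — so $u_*([M]_{ko})$ lands in the image of $KO_n(B\Gamma^{(n-1)})$. Condition $\bar I$ is precisely the statement that $(\bar\phi_n)_*$ is injective on exactly that image, so it forces $j_*u_*([M]_{ko})=0$ already in $ko_n(B\Gamma^{(n-1)}/B\Gamma^{(n-2)})$.

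Finally I would translate this back to ordinary homology. The vanishing of $j_*u_*([M]_{ko})$ in $ko_n(B\Gamma^{(n-1)}/B\Gamma^{(n-2)})\cong \pi_n(B\Gamma^{(n-1)}/B\Gamma^{(n-2)})$ is, by the obstruction-theoretic analysis already carried out in the proof of \lemref{2-obstruction} (the diagram there identifies $\bar u_*(1)$ with the Poincaré dual of the primary obstruction to deforming $u$ into $B\Gamma^{(n-2)}$), equivalent to the vanishing of that obstruction, hence $u$ deforms into $B\Gamma^{(n-1)}$, i.e. $M$ is inessential by \defref{}. Alternatively, and more directly: the Hurewicz/edge map $ko_n(B\Gamma^{(n-1)})\to H_n(B\Gamma^{(n-1)})$ combined with \propref{bordism} ($\Omega_n(B\Gamma,B\Gamma^{(n-2)})\cong H_n(B\Gamma,B\Gamma^{(n-2)})$) shows that killing the $ko$-class in the quotient kills $(u_M)_*([M])$ in $H_n(B\Gamma^{(n)},B\Gamma^{(n-2)})$ and hence in $H_n(B\Gamma)$, giving inessentiality via \theoref{ref}.

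\textbf{Main obstacle.} The delicate point is the passage from the vanishing of $u_*([M]_{KO})$ in the \emph{full} $KO_n(B\Gamma)$ to a statement about the image in $KO_n(B\Gamma^{(n)})$ of a class coming from $KO_*(B\Gamma^{(n-1)})$ — one must verify that the $ko$-fundamental class of the spin manifold, pushed into the skeleton and then into the quotient $B\Gamma^{(n)}/B\Gamma^{(n-2)}$, really does lie in the subgroup on which condition $\bar I$ grants injectivity, and that the identification $ko_n\cong\pi^s_n\cong KO_n$ on the relevant quotient complexes is legitimate for all $n>4$ (this is where the dimension restriction $n>4$ is genuinely used, via \propref{pi-ko-iso} and \propref{per}). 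Keeping the diagram chase honest across three homology theories simultaneously is the part that requires care.
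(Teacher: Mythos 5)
There is a genuine gap, and it sits at the heart of your argument. You write that ``$u(M)\subset B\Gamma^{(n-1)}$ is available since $M$ can be given a CW structure with a single $n$-cell and the classifying map pushed to the $(n-1)$-skeleton off that cell,'' and you then use this to claim that $u_*([M]_{ko})$ lies in the image of $KO_n(B\Gamma^{(n-1)})$, which is the subgroup on which $\bar I$ (at index $n$) grants injectivity. But $u(M)\subset B\Gamma^{(n-1)}$ is literally the conclusion of the proposition you are trying to prove. Cellular approximation only gives $u(M)\subset B\Gamma^{(n)}$, and the single $n$-cell of $M$ can very well hit $n$-cells of $B\Gamma$; indeed, if $u_*([M]_{ko})$ came from $ko_n(B\Gamma^{(n-1)})$, then already $u_*([M])=0$ in $H_n$ by looking at the top filtration of the Atiyah--Hirzebruch spectral sequence, which is exactly the inessentiality you are trying to establish. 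The argument is circular at this step. A second, independent error is the assertion that $ko_n\cong KO_n$ on $B\Gamma/B\Gamma^{(n-2)}$, attributed to \propref{pi-ko-iso}. That proposition concerns the map $\pi^s_n\to ko_n$, not $ko_n\to KO_n$, and the claimed $per$-isomorphism is in fact false on $B\Gamma/B\Gamma^{(n-2)}$: this space is generally infinite-dimensional, and already a cell in dimension $n+4$ contributes $KO_{-4}(pt)=\Z$ to $KO_n$ while contributing nothing to $ko_n$. So the comparison you are relying on does not hold.

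The paper sidesteps both problems with a small but essential shift of index. It works with the quotient $B\Gamma^{(n+1)}/B\Gamma^{(n-1)}$, which \emph{is} $(n-1)$-connected and $(n+1)$-dimensional, so \propref{per} applies and $per\colon ko_n\to KO_n$ is an honest isomorphism there. It then applies condition $\bar I$ with $n$ replaced by $n+1$: the injectivity of $KO_n(B\Gamma^{(n+1)}/B\Gamma^{(n-1)})\to KO_n(B\Gamma/B\Gamma^{(n-1)})$ is restricted to the image of $KO_n(B\Gamma^{(n)})$, and since $u$ can be assumed to land in $B\Gamma^{(n)}$ by cellular approximation alone (no circularity), the class $u_*([M]_{KO})$ genuinely lies in that image. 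From the hypothesis $u_*([M]_{KO})=0$ in $KO_n(B\Gamma)$ and this injectivity, the image in $KO_n(B\Gamma^{(n+1)}/B\Gamma^{(n-1)})$ vanishes, hence via the $per$-isomorphism $u'_*([M]_{ko})=0$ there, hence $u'_*([M])=0$ in ordinary homology by the natural transformation $ko_*\to H_*$, and finally $u_*([M])=0$ in $H_n(B\Gamma^{(n+1)})$ because $q_*$ is injective (as $H_n(B\Gamma^{(n-1)})=0$ for dimension reasons), so \theoref{ref} finishes. Your overall strategy (reduce to \theoref{ref}, compare $KO$, $ko$, and $H_*$) is in the right spirit, but without the shift $n\mapsto n+1$ in $\bar I$ and the use of the $(n-1)$-connected $(n+1)$-dimensional quotient, the argument cannot be made to close.
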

\begin{proof}
We may assume that $u(M)\subset B\Gamma^{(n)}$.
Then by $\bar I$, $u_*([M]_{KO})=0$ in $KO_n(B\Gamma^{(n)})$ and, hence, in
$KO_n(B\Gamma^{(n+1)}/B\Gamma^{(n-1)})$. Let $u'=q\circ u$ where $q:B\Gamma^{(n+1)}\to 
B\Gamma^{(n+1)}/B\Gamma^{(n-1)}$
is the quotient map. In the commutative diagram
$$
\begin{CD}
ko_n(M) @>u'_*>> ko_n(B\Gamma^{(n+1)}/B\Gamma^{(n-1)})\\
@VperVV @V{\cong}VperV\\
KO_n(M) @>u'_*>> KO_n(B\Gamma^{(n+1)}/B\Gamma^{(n-1)})\\
\end{CD}
$$
the homomorphism $per$ is an isomorphism in view of Proposition~\ref{per}. This 
implies that  $u_*'([M]_{ko})=0$ in $ko_n(B\Gamma^{(n)}/B\Gamma^{(n-1)})$ . 
In view of the natural transformation of homology theories $ko_*\to H_*(\ ;\mathbb Z)$  it follows that $u_*'([M])=0$ in $H_n(B\Gamma^{(n+1)}/B\Gamma^{(n-1)})$.
Since the homomorphism $q_*:H_n(B\Gamma^{(n+1)})\to H_n(B\Gamma^{(n+1)}/B\Gamma^{(n-1)})$ is injective, we obtain that $u_*([M])=0$.
Theorem~\ref{ref} completes the proof.
\end{proof}

\begin{prop}\label{strong iness}
Suppose that an inessential manifold $M$ has the fundamental group with property $\bar I_{(2)}$. Then $M$ is strongly inessential.
\end{prop}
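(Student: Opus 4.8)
The plan is to run the same obstruction-theory argument as in \lemref{2-obstruction}, but reduced at the prime $2$, using the hypothesis $\bar I_{(2)}$ in place of the vanishing of $j_*u_*([M]_{ko})$. First I would invoke \propref{ref2}: since $M$ is inessential, choose a classifying map $u:M\to B\Gamma^{(n-1)}$ with $u(M^{(n-1)})\subset B\Gamma^{(n-2)}$, and give $M$ a CW structure with a single top cell $\psi:D^n\to M$. As in \lemref{2-obstruction}, the single obstruction to pushing $u$ into $B\Gamma^{(n-2)}$ is governed by $\bar u_*(1)\in\pi_n(B\Gamma^{(n-1)}/B\Gamma^{(n-2)})$, where $\bar u:S^n=M/M^{(n-1)}\to B\Gamma^{(n-1)}/B\Gamma^{(n-2)}$; and it suffices to show $\bar u_*(1)=0$. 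Since $n>4$, \propref{pi-ko-iso} identifies $\pi_n(B\Gamma^{(n-1)}/B\Gamma^{(n-2)})$ with $ko_n(B\Gamma^{(n-1)}/B\Gamma^{(n-2)})$, so it is enough to kill $\bar u_*([M]_{ko})=j_*u_*([M]_{ko})$ in $ko_n(B\Gamma^{(n-1)}/B\Gamma^{(n-2)})$.

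Here the difference from the spin case appears: we no longer know the $ko$ fundamental class vanishes, only that $M$ is inessential, i.e. $u_*([M])=0$ in $H_n(B\Gamma)$. The class $j_*u_*([M]_{ko})$ need not be zero, but it is a $2$-torsion (in fact, $2$-primary) phenomenon: rationally and at odd primes $ko_*$ agrees with ordinary homology through a range, so the obstruction class, being detected by a class that dies in $H_*$, is annihilated after inverting $2$. Thus $\bar u_*(1)$ lies in the $2$-local summand, and I would work in $ko_n(B\Gamma^{(n-1)}/B\Gamma^{(n-2)})\otimes\Z_{(2)}$. Now pass to $KO$: the class $\bar u_*(1)$ comes from $KO_n(M)$ via $j'_*$ followed by $u_*$, factoring through the image of $KO_n(B\Gamma^{(n-1)})$ in $KO_n(B\Gamma^{(n-1)}/B\Gamma^{(n-2)})$; since $M$ is inessential, the corresponding class in $KO_n(B\Gamma)$ (equivalently $KO_n(B\Gamma/B\Gamma^{(n-2)})$) is $0$. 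Indeed, $u_*([M]_{KO})$ maps to $0$ in $KO_n(B\Gamma)$ because its image in $H_n(B\Gamma;\Q)$ and at odd primes vanishes (inessentiality) while the $2$-torsion... no — more carefully, one uses that $j_*u_*([M]_{KO})=0$ in $KO_n(B\Gamma/B\Gamma^{(n-2)})$ already follows from $u_*([M])=0$ together with the comparison $ko\to H$ and \propref{per} identifying $per$ as an isomorphism in the relevant relative range, exactly as in the proof of \propref{KO-iness}.

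So the chain is: $\bar u_*(1)\otimes\Z_{(2)}$ lifts to $KO_n(B\Gamma^{(n-1)}/B\Gamma^{(n-2)})\otimes\Z_{(2)}$, lies in the image of $KO_*(B\Gamma^{(n-1)})\otimes\Z_{(2)}$, and maps to $0$ in $KO_n(B\Gamma/B\Gamma^{(n-2)})\otimes\Z_{(2)}$; by hypothesis $\bar I_{(2)}$ the map $(\bar\phi_n)_*$ is injective on that image, so $\bar u_*(1)\otimes\Z_{(2)}=0$ in $KO$, hence in $ko\otimes\Z_{(2)}$ by \propref{per}, hence in $\pi_n(B\Gamma^{(n-1)}/B\Gamma^{(n-2)})\otimes\Z_{(2)}$. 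Combined with the odd-primary and rational vanishing, $\bar u_*(1)=0$, so the obstruction vanishes and $u$ deforms to $B\Gamma^{(n-2)}$; that is, $M$ is strongly inessential. The main obstacle I anticipate is organizing the two-step localization cleanly: showing that the obstruction class is genuinely supported at $2$ (so that working $\Z_{(2)}$-locally loses nothing) and that the relevant $KO$-class both lands in the image of $KO_*(B\Gamma^{(n-1)})$ and becomes $0$ downstairs — this requires carefully assembling the commutative ladders relating $ko$, $KO$, and $H_*$ for the pairs $(B\Gamma,B\Gamma^{(n-2)})$ and their skeleta, in the spirit of \propref{KO-iness} and \lemref{2-obstruction}, and invoking \propref{per} to get $per$ an isomorphism in each relative range that arises.
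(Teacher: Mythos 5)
There is a genuine gap: you are trying to kill the wrong obstruction. You aim to deform $u$ into $B\Gamma^{(n-2)}$ inside $B\Gamma^{(n-1)}$, and so want to show $\bar u_*(1)=0$ in $\pi_n(B\Gamma^{(n-1)}/B\Gamma^{(n-2)})$. The paper's proof deliberately does \emph{not} do this; it even inserts a remark that this ``simpler'' lifting problem might have no solution. Instead, the paper composes with the inclusion $i:B\Gamma^{(n-1)}\to B\Gamma^{(n)}$ and lifts $i\circ u:M\to B\Gamma^{(n)}$ to $B\Gamma^{(n-2)}$. The single obstruction then lives in $\pi_n(B\Gamma^{(n)},B\Gamma^{(n-2)})_\Gamma$ and is identified, after a nontrivial Five Lemma argument comparing coinvariants of images with the image $\im(i_*)\subset\pi_n(B\Gamma^{(n)}/B\Gamma^{(n-2)})$, with the class $i_*\bar u_*(1)$ --- the pushforward of your $\bar u_*(1)$ under $i_*:\pi_n(B\Gamma^{(n-1)}/B\Gamma^{(n-2)})\to\pi_n(B\Gamma^{(n)}/B\Gamma^{(n-2)})$.

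Your invocation of $\bar I_{(2)}$ silently commits this same slide. The map $(\bar\phi_n)_*$ in condition $\bar I_{(2)}$ has domain $KO_*(B\Gamma^{(n)}/B\Gamma^{(n-2)})\otimes\Z_{(2)}$, not $KO_*(B\Gamma^{(n-1)}/B\Gamma^{(n-2)})\otimes\Z_{(2)}$; applying it to the image of $\bar u_*(1)$ in $KO_n(B\Gamma^{(n)}/B\Gamma^{(n-2)})\otimes\Z_{(2)}$ (which is where ``lies in the image of $KO_*(B\Gamma^{(n-1)})$'' puts you) gives $i_*\bar u_*(1)=0$, not $\bar u_*(1)=0$. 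Since $i_*$ need not be injective --- the new $n$-cells of $B\Gamma^{(n)}$ can kill classes in $\pi_n(B\Gamma^{(n-1)}/B\Gamma^{(n-2)})$ --- you cannot conclude vanishing of $\bar u_*(1)$, and your argument ends with exactly the datum the paper's version of the obstruction theory is designed to exploit but yours is not. The fix is to change the lifting problem: work with $i\circ u$ into $B\Gamma^{(n)}$, identify the obstruction with $i_*\bar u_*(1)$ via the coinvariants/Five Lemma diagram, and then your $ko/KO/\Z_{(2)}$ chain, together with Proposition~\ref{per} and the 2-torsion of $KO_n(B\Gamma^{(n-1)}/B\Gamma^{(n-2)})$, correctly forces $i_*\bar u_*(1)=0$. (Your 2-torsion heuristic via $ko$ vs.\ $H_*$ is also looser than what is actually used; the paper gets it from Proposition~\ref{per} directly.)
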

\begin{proof}
We may assume that $M$ has a CW complex structure with one $n$-dimensional cell. Let $\psi:D^n\to M$ be its characteristic map.
By Proposition~\ref{ref2} we may assume that the classifying map $u$ satisfies the condition $u(M^{(n-1)})\subset B\Gamma^{(n-2)}$ and $u(M)\subset B\Gamma^{(n-1)}$.
We will show that the lifting problem

\[
  \begin{tikzcd}
  M^{(n-1)} \arrow[r] \arrow[d, "\subset"]& B\Gamma^{(n-2)} \arrow[d, "\subset"] \\
   M \arrow[r,"iu"]\arrow[dotted]{ur} &  B\Gamma^{(n)}.  
  \end{tikzcd}
\]
has a solution. Here $i:B\Gamma^{(n-1)}\to B\Gamma^{(n)}$.  It would  mean that there is a homotopy lift $\hat u: M\to B\Gamma^{(n-2)}$ of $i\circ u$ which agrees with $u$ on $M^{(n-2)}$.
Since $n\ge 4$, the map $\hat u$ induces an isomorphism of the fundamental groups and, hence, is  classifying map. 

We note that a 'simpler' lifting problem
\[
  \begin{tikzcd}
  M^{(n-1)} \arrow[r] \arrow[d, "\subset"]& B\Gamma^{(n-2)} \arrow[d, "\subset"] \\
   M \arrow[r,"u"]\arrow[dotted]{ur} &  B\Gamma^{(n-1)}.  
  \end{tikzcd}
\]
might have no solution.

Note that  the homotopy groups of the $(n-1)$-homotopy  fiber $F$ of the inclusion $B\Gamma^{(n-2)}\to B\Gamma^{(n)}$ equal the relative $n$-homotopy 
groups, $\pi_{n-1}(F)=\pi_n(B\Gamma^{(n)},B\Gamma^{(n-2)})$.
Then the first and the only obstruction to lift $iu$ to $B\Gamma^{(n-2)}$ is defined by the cocycle
$c_u:C_n(M)\to\pi_{n-1}(F)$ represented by the composition
 $$\pi_n(D^n,\partial D^n)\stackrel{\psi_*}\to\pi_n(M,M^{(n-1)})\stackrel{u_*}\to\pi_n(B\Gamma^{(n-1)},B\Gamma^{(n-2)})\stackrel{i_*}\to\pi_n(B\Gamma^{(n)},B\Gamma^{(n-2)})$$ 
with the cohomology class
$o_u=[c_u]\in  H^n(M;\pi_n(B\Gamma^{(n)},B\Gamma^{(n-2)}))$.  By the Poincare Duality with local coefficients, the cohomology class $o_u$ is dual
to the homology class $$PD(o_u)\in H_0(M;\pi_n(B\Gamma^{(n)},B\Gamma^{(n-2)}))=\pi_n(B\Gamma^{(n)},B\Gamma^{(n-2)})_{\Gamma}$$
represented by $q_*i_*u_*\psi_*(1)$ where $$q_*:\pi_n(B\Gamma^{(n)},B\Gamma^{(n-2)})\to\pi_n(B\Gamma^{(n)},B\Gamma^{(n-2)})_{\Gamma}$$ 
is the projection onto the group of coinvariants.  

Note that $\pi_n(B\Gamma^{(n-1)},B\Gamma^{(n-2)})=\pi_n(E\Gamma^{(n-1)},E\Gamma^{(n-2)})$.
Below we will identify these groups. Denote by$$\tilde i:(E\Gamma^{(n-1)},E\Gamma^{(n-2)})  \to (E\Gamma^{(n)},E\Gamma^{(n-2)})$$ is the inclusion induced by $i$.

Since $n\le 2(n-2)-1$, by 
Theorem~\ref{h-excision}, $$\pi_n(E\Gamma^{(n-1)},E\Gamma^{(n-2)})=\pi_n(E\Gamma^{(n-1)}/E\Gamma^{(n-2)}).$$
It is easy to see that $$\pi_n(E\Gamma^{(n-1)}/E\Gamma^{(n-2)})_\Gamma=\pi_n(B\Gamma^{(n-1)}/B\Gamma^{(n-2)}).$$
Similarly, $$\pi_n(E\Gamma^{(n)}/E\Gamma^{(n-1)})_\Gamma=\pi_n(B\Gamma^{(n)}/B\Gamma^{(n-1)}).$$
The homotopy exact sequence of the triple $(E\Gamma^{(n)},E\Gamma^{(n-1)},E\Gamma^{(n-2)})$ brings the following commutative diagram
$$
\begin{CD}
\pi_{n+1}(E\Gamma^{(n)},E\Gamma^{(n-1)})@>>>\pi_{n}(E\Gamma^{(n-1)},E\Gamma^{(n-2)}) @>\tilde i_*>> im(\tilde i_*) @>>> 0\\
@VVV @VVV @Vq_*VV @.\\
\pi_{n+1}(E\Gamma^{(n)},E\Gamma^{(n-1)})_\Gamma@>>>\pi_{n}(E\Gamma^{(n-1)},E\Gamma^{(n-2)})_\Gamma @>\tilde i_*\otimes_\Gamma 1_\Z>> im(\tilde i_*)_\Gamma @>>> 0\\
@V\cong VV @V\cong VV @V\xi VV @.\\
\pi_{n+1}(B\Gamma^{(n)}/B\Gamma^{(n-1)})@>>>\pi_{n}(B\Gamma^{(n-1)}/B\Gamma^{(n-2)}) @> i_*>> im( i_*) @>>> 0\\
\end{CD}
$$
where the row in the middle is exact as obtained by tensor product of the first row  with $\Z$ over $\Z\Gamma$.
By the Five Lemma the homomorphism $\xi$ is an isomorphis.

Denote by $\bar u:M/M^{(n-1)}=S^n\to B\Gamma^{(n-1)}/B\Gamma^{(n-2)}$ the induced map. The commutative diagram
$$
\begin{CD}
\pi_n(M,M^{(n-1)}) @>\tilde i_*u_*>>im(\tilde i_*) @>q_*>> im(\tilde i_*)_\Gamma @.\\
@A\psi_*AA @. @VV\xi V @.\\
\pi_n(D^n,\partial D^n) @>=>>\pi_n(M/M^{(n-1)}) @>i_*\bar u_*>> im(i_*)@>\subset>> \pi_n(B\Gamma^{(n)}/B\Gamma^{(n-2)})\\
\end{CD}
$$
implies that
$i_*\bar u_*(1)=\xi q_*\tilde i_*u_*\psi_*(1).$  Thus, $i_*\bar u_*(1)=0$ if and only if the obstruction $o_u$ vanishes.

We show that $i_*\bar u_*(1)=0$.
The restriction $n>4$ and Proposition~\ref{pi-ko-iso} imply that $\bar
u_*(1)$ survives to the $ko$-homology group:
$$
\begin{CD}
\pi_n(B\Gamma^{(n)}/B\Gamma^{(n-2)}) @>\cong >> \pi_n^s(B\Gamma^{(n)}/B\Gamma^{(n-2)}) @>\cong >>ko_n(B\Gamma^{(n)}/B\Gamma^{(n-2)}).\\
\end{CD}
$$

Then the  commutative diagram 
$$
\begin{CD}
\pi_n(S^n)  @>\cong>> ko_n(S^n)\\
@V\bar u_*VV @V\bar u_*VV \\
\pi_n(B\Gamma^{(n)}/B\Gamma^{(n-2)}) @>\cong>> ko_n(B\Gamma^{(n)}/B\Gamma^{(n-2)})
\end{CD}
$$
implies that $i_*\bar u_*(1)= 0$ for $ko_n$ if and only if $i_*\bar u_*(1)=0$ for $\pi_n$. 

Note that in the diagram
$$
\begin{CD}
ko_n(M/M^{(n-1)}) @>\bar u_*>>ko_n(B\Gamma^{(n-1)}/B\Gamma^{(n-2)}) @>i_*>> ko_n(B\Gamma^{(n)}/B\Gamma^{(n-2)})\\
@V\cong VV  @V\cong VV @V\cong VV\\
KO_n(M/M^{(n-1)}) @>\bar u_*>> KO_n(B\Gamma^{(n-1)}/B\Gamma^{(n-2)})@>i_*>> KO_n(B\Gamma^{(n)}/B\Gamma^{(n-2)}) \\
\end{CD}
$$
the right vertical arrow is an isomorphism by Proposition~\ref{per}. Since the group $KO_n(B\Gamma^{(n-1)}/B\Gamma^{(n-2)})$ is 2-torsion,
from the property $I_{(2)}$ it follows that $i_*\bar u_*(1)=0$ for $KO$. The above diagram implies that $i_*\bar u_*(1)=0$
for $ko$.
\end{proof}

\begin{thm}\label{n}
Suppose that a group $\Gamma$ has the property $ \bar I$ and satisfies the Strong Novikov conjecture.
Then the Strong Gromov conjecture holds for spin $n$-manifolds, $n> 4$, with the fundamental group $\Gamma$.
\end{thm}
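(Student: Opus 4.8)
The plan is to show that, under these hypotheses, the manifold $M$ is \emph{itself} strongly inessential, so that the trivial covering $M'=M$ already witnesses the Strong Gromov conjecture; no passage to a proper finite cover is needed. The proof is then a short concatenation of the results assembled in this section.

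First I would apply Rosenberg's Theorem~\ref{Ros}. Since $M$ is a closed spin $n$-manifold of positive scalar curvature whose fundamental group $\Gamma$ satisfies the Strong Novikov conjecture, the classifying map $u:M\to B\Gamma$ of the universal cover satisfies $u_*([M]_{KO})=0$ in $KO_n(B\Gamma)$. Next, since $\Gamma$ has property $\bar I$, Proposition~\ref{KO-iness} applies verbatim to this vanishing and concludes that $M$ is inessential. Finally, the implication $\bar I\Rightarrow\bar I_{(2)}$ established above gives that $\Gamma$ also has property $\bar I_{(2)}$, so Proposition~\ref{strong iness} upgrades inessentiality to strong inessentiality. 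Hence $M$ is strongly inessential, and taking $M'=M$ proves the conjecture for the class of manifolds in question.

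The substance of the theorem has been front-loaded into Propositions~\ref{KO-iness} and~\ref{strong iness} (and into Rosenberg's theorem), so there is no genuine obstacle remaining at this step; the only points requiring care are bookkeeping ones. One should check that the dimension hypothesis $n>4$ is precisely what the cited propositions require — it is what makes the excision and stable-range arguments behind Propositions~\ref{pi-ko-iso} and~\ref{per} apply (the relevant inequality being $n\le 2(n-2)-1$, i.e.\ $n\ge 5$) — and that property $\bar I$ is strong enough to feed both invocations: it is used directly for the inessentiality step through Proposition~\ref{KO-iness}, and it implies the weaker property $\bar I_{(2)}$ needed for Proposition~\ref{strong iness}. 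With these routine verifications in place, the proof is complete.
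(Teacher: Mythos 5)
Your proposal is correct and follows essentially the same route as the paper: Rosenberg's theorem gives $u_*([M]_{KO})=0$, Proposition~\ref{KO-iness} (using $\bar I$) gives inessentiality, and Proposition~\ref{strong iness} (using $\bar I_{(2)}$, which follows from $\bar I$) upgrades this to strong inessentiality. Your note that one invokes the implication $\bar I\Rightarrow\bar I_{(2)}$ before applying Proposition~\ref{strong iness} is a useful clarification that the paper leaves implicit.
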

\begin{proof}
Let $M$ be a positive scalar curvature spin $n$-manifold.
By Rosenberg's theorem (Theorem~\ref{Ros}) $u_*([M]_{KO})=0$. 
By Proposition~\ref{KO-iness} $ M$ is inessential. By Proposition~\ref{strong iness} $ M$ is strongly inessential.
\end{proof}

\section{Totally non-spin manifolds}

Let  $\nu_M:M\to BSO$ denote
a classifying map for the stable normal bundle of  a  manifold $M$.

The following theorem was proven in~\cite{BD2}.
\begin{thm}\label{main2} Let $M$ be a totally non-spin closed orientable inessential $n$-manifold, $n\geq 5$, whose fundamental group is of the type $FP_3$. 
Then $M$  is strongly inessential. 
\end{thm}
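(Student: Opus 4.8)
The plan is to follow the same obstruction-theoretic skeleton as the proof of Proposition~\ref{strong iness}, but to replace the $KO$/$ko$-theoretic argument — which is unavailable for totally non-spin manifolds since there is no $ko$-orientation — with an argument based on oriented bordism and ordinary homology. First I would give $M$ a CW structure with a single $n$-cell, with characteristic map $\psi:D^n\to M$, and use Theorem~\ref{ref} together with Proposition~\ref{ref2} to choose a classifying map $u:M\to B\Gamma$ with $u(M)\subset B\Gamma^{(n-1)}$ and $u(M^{(n-1)})\subset B\Gamma^{(n-2)}$, which is possible because $M$ is inessential. As in Proposition~\ref{strong iness}, deforming $iu:M\to B\Gamma^{(n)}$ into $B\Gamma^{(n-2)}$ rel $M^{(n-2)}$ has a single obstruction $o_u\in H^n(M;\pi_n(B\Gamma^{(n)},B\Gamma^{(n-2)}))$, and by Poincar\'e duality with local coefficients and the identifications coming from Theorem~\ref{h-excision} (valid since $n\ge 5$, so $n\le 2(n-2)-1$), the class $o_u$ vanishes if and only if the image $\bar u_*(1)$ of the generator under $\bar u:S^n=M/M^{(n-1)}\to B\Gamma^{(n-1)}/B\Gamma^{(n-2)}$ dies in $\pi_n(B\Gamma^{(n)}/B\Gamma^{(n-2)})$, equivalently (via the connective-cover comparison and Theorem~\ref{h-excision}) in $H_n(B\Gamma^{(n)}/B\Gamma^{(n-2)})$.

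So the task reduces to showing $j_*u_*([M])=0$ in $H_n(B\Gamma^{(n-1)}/B\Gamma^{(n-2)})$, where $j$ is the quotient map, and then pushing forward to $B\Gamma^{(n)}/B\Gamma^{(n-2)}$. Here is where I would bring in the totally non-spin hypothesis and the $FP_3$ condition: by Proposition~\ref{bordism}, $\Omega_n(B\Gamma,B\Gamma^{(n-2)})\cong H_n(B\Gamma,B\Gamma^{(n-2)})$, so the relative bordism class $[M,u]$ (noting $u(M^{(n-1)})\subset B\Gamma^{(n-2)}$ makes $(M,M^{(n-1)})\to(B\Gamma,B\Gamma^{(n-2)})$ a relative cycle, or better, one works directly with the pair $(B\Gamma^{(n-1)},B\Gamma^{(n-2)})$ and the absolute class $\bar u_*[S^n]$) captures exactly the obstruction. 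The idea is to realize $[M,u]$ by a manifold with a totally non-spin normal structure — using $\nu_M:M\to BSO$ and the fact that $w_2(\widetilde M)\ne 0$ — and then perform surgery below the middle dimension to make the bordism representative highly connected, so that the $FP_3$ hypothesis on $\Gamma$ (giving finite generation/presentation control on $H_1,H_2,H_3$ of $B\Gamma$, hence on the cells of $B\Gamma$ in low degrees) allows one to kill $\bar u_*[S^n]$ by an explicit cobounding manifold mapping to $B\Gamma^{(n-2)}$. Concretely I expect the argument to run: the composite $S^n\xrightarrow{\bar u}B\Gamma^{(n-1)}/B\Gamma^{(n-2)}$ lifts, after the connective-cover identification of Proposition~\ref{pi-ko-iso} (or rather its ordinary-homology analogue via the Atiyah–Hirzebruch spectral sequence / the fact that in this range stable homotopy of the quotient agrees with its homology), to a class detected in oriented bordism, and the totally non-spin condition forces this bordism class to be divisible/trivial in the relevant range.

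The main obstacle I anticipate is precisely the step of converting "$M$ is totally non-spin" into a statement that kills $\bar u_*(1)$: unlike the spin case, where Rosenberg's index theorem and the $ko$-orientation do the work directly, here one has no index obstruction and must instead argue geometrically that the relative bordism group $\Omega_n(B\Gamma^{(n-1)},B\Gamma^{(n-2)})$ — or its image after applying $i_*$ into the $(n)$-skeleton quotient — receives $[M,u]$ as zero. I would handle this by a surgery argument on the $n$-manifold $M$ itself: since $M$ is totally non-spin one can do surgery on embedded spheres to simplify $u$, using that the map $u:M\to B\Gamma^{(n-1)}$ can be made $(n-2)$-connected up to the top cell once $u_*[M]=0$; the $FP_3$ assumption is exactly what is needed to carry the surgeries through dimensions $2$ and $3$ (where $w_2$-obstructions would otherwise appear) without being blocked, the non-spin condition guaranteeing the normal data extends over the traces of the surgeries. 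Once $u$ is simplified this way, $\bar u$ factors through $B\Gamma^{(n-2)}$ on the nose and the obstruction vanishes, giving the desired deformation of the classifying map to the $(n-2)$-skeleton, i.e. strong inessentiality.
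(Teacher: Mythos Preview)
Your reduction of $o_u=0$ to the vanishing of $\bar u_*(1)$ in $\pi_n(B\Gamma^{(n)}/B\Gamma^{(n-2)})$ is correct and parallels Proposition~\ref{strong iness}. The gap is the next clause: you claim this is \emph{equivalent} to vanishing in $H_n(B\Gamma^{(n)}/B\Gamma^{(n-2)})$. That is false. For the $(n-2)$-connected $n$-complex $X=B\Gamma^{(n)}/B\Gamma^{(n-2)}$ the Atiyah--Hirzebruch filtration gives a short exact sequence
\[
0\longrightarrow H_{n-1}(X;\Z/2)\longrightarrow \pi_n^s(X)\longrightarrow H_n(X;\Z)\longrightarrow 0,
\]
so the Hurewicz/edge map to $H_n$ has kernel the $\eta$-part $H_{n-1}(X;\Z/2)$, which is typically nonzero. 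Worse: your element $\bar u_*(1)$ factors through $B\Gamma^{(n-1)}/B\Gamma^{(n-2)}$, a wedge of $(n-1)$-spheres, so its image in $H_n(X)$ is \emph{automatically} zero while its image in $\pi_n^s(X)$ sits entirely in the $\eta$-piece. Thus the ``reduced task'' $j_*u_*([M])=0$ in $H_n$ is satisfied for free and tells you nothing about $o_u$. This is exactly why the spin argument needs the extra hypothesis~$\bar I_{(2)}$ to control that $\Z/2$ layer via $KO$, and why neither ordinary homology nor oriented bordism (which on this quotient coincides with $H_n$ by Proposition~\ref{bordism}) can substitute.

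The surgery paragraph does not repair this. You propose surgery on $M$ to ``simplify $u$'', but surgery changes the manifold, and the statement concerns the given $M$. The proof in \cite{BD2}, sketched for $n=4$ in Theorem~\ref{main1} here, never attacks $o_u$ at all. One first uses Proposition~\ref{bordism} to produce an oriented bordism $(W^{n+1},q)$ from $(M,u)$ to some $(N,q|_N)$ with $q(N)\subset B\Gamma^{(n-2)}$, and then performs $1$- and $2$-surgery on $\Int W$---this is where ``totally non-spin'' (surjectivity of $(\nu_W)_*:\pi_2(W)\to\Z/2$, so the kernel consists of spheres with trivial normal bundle) and $FP_3$ (finite generation of that kernel as a $\Z\Gamma$-module) enter---to arrange that $M\hookrightarrow\hat W$ is a $2$-equivalence. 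For $n\ge 5$ one then invokes Wall's theorem on cell structures of cobordisms to give $\hat W$ a handle decomposition relative to $M$ with handles of index $\ge 3$, and the map into $B\Gamma^{(n-2)}$ is pushed across $\hat W$ from $N$ to $M$; restricting to $M$ yields a classifying map $M\to B\Gamma^{(n-2)}$. The obstruction class is bypassed geometrically, not computed.
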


The proof of  Theorem~\ref{main2} uses Wall's theorem~\cite{W} on the cell structure of cobordisms which is known only in dimension $\ge 5$.
In this section we extend this result to $n=4$ using  obstruction theory.

We recall that a map $f:X\to Y$ is called {\em $k$-equivalence} if induces an isomorphism $f_*:\pi_i(X)\to\pi_i(Y)$ for $i<k$ and an epimorphism fro $i=k$.
\begin{prop}\label{local coeff}
Let $(X,Y)$ be a CW pair such that the inclusion $Y\to X$ is 2-equivalence. Then $H_2(X,Y;F)=0$ for any $\pi$-module $F$ where
 $\pi=\pi_1(X)=\pi_1(Y)$ .
\end{prop}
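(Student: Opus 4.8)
The plan is to use the relative Hurewicz theorem together with the hypothesis that $Y\to X$ is a $2$-equivalence. First I would reduce to the case where $\pi_1(Y)=\pi_1(X)=\pi$ acts on everything through the universal cover: pass to the universal cover $\widetilde X$ of $X$ and let $\widetilde Y$ be the preimage of $Y$, so that $(\widetilde X,\widetilde Y)$ is the universal cover of the pair $(X,Y)$ and $H_*(X,Y;F)$ is computed as the homology of the complex $F\otimes_{\mathbb Z\pi} C_*(\widetilde X,\widetilde Y)$ (and similarly with $\Hom$ for cohomology, though here homology suffices). The key point is to show $H_2(\widetilde X,\widetilde Y;\mathbb Z)=0$ and, more precisely, that the chain complex $C_*(\widetilde X,\widetilde Y)$ has trivial homology in degrees $\le 2$, because then tensoring over $\mathbb Z\pi$ with any module $F$ still gives something with vanishing $H_2$ — one has to be slightly careful since tensoring is only right exact, so I would instead argue that $C_*(\widetilde X,\widetilde Y)$ is chain homotopy equivalent (over $\mathbb Z\pi$) to a complex that is zero in degrees $\le 2$, or equivalently that $\pi_2(\widetilde X,\widetilde Y)=0$ and the relative chain complex is $\le 2$-acyclic in the strong sense needed.

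The core computation: since $Y\to X$ is a $2$-equivalence, the induced maps $\pi_1(\widetilde Y)\to\pi_1(\widetilde X)$ — wait, better to say the relative homotopy groups $\pi_i(X,Y)$ vanish for $i\le 2$ (that is exactly what "$2$-equivalence" gives via the long exact sequence of the pair: $\pi_1(Y)\xrightarrow{\cong}\pi_1(X)$ forces $\pi_2(X,Y)\to\pi_1(Y)$ to be injective with image the kernel of an isomorphism, and surjectivity of $\pi_2(X)\to\pi_2(Y)$... ). Let me restate: a $2$-equivalence gives $\pi_i(X,Y)=0$ for $i\le 2$. The pair $(\widetilde X,\widetilde Y)$ is simply connected (indeed $\widetilde Y$ is connected because $\pi_1(Y)\to\pi_1(X)$ is surjective) with $\pi_i(\widetilde X,\widetilde Y)=\pi_i(X,Y)=0$ for $i\le 2$. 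By the relative Hurewicz theorem, $H_i(\widetilde X,\widetilde Y;\mathbb Z)=0$ for $i\le 2$. Hence the relative chain complex $C_*(\widetilde X,\widetilde Y)$, which is a complex of free $\mathbb Z\pi$-modules, is acyclic in degrees $\le 2$; after discarding degrees below, one sees $\partial_2\colon C_2\to C_1$ is injective (no $H_1$, $H_0$) and its image equals $\ker\partial_1$... Actually the clean statement I want: because a nonnegatively graded chain complex of projective (here free) $\mathbb Z\pi$-modules with $H_i=0$ for $i\le 2$ is chain-homotopy equivalent to one concentrated in degrees $\ge 3$, it follows that for any right $\mathbb Z\pi$-module $F$ the homology $H_i(F\otimes_{\mathbb Z\pi}C_*(\widetilde X,\widetilde Y))=H_i(X,Y;F)$ vanishes for $i\le 2$; in particular $H_2(X,Y;F)=0$.

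The main obstacle is the homological-algebra point just flagged: $H_2(X,Y;F)$ is computed from $F\otimes_{\mathbb Z\pi}C_*(\widetilde X,\widetilde Y)$, and merely knowing $H_2$ and $H_1$ of the untwisted complex vanish is not by itself enough after tensoring, since $\Tor$ terms could appear; the resolution of this is to upgrade "$H_i=0$ for $i\le 2$" to the statement that the truncation $\tau_{\le 2}C_*$ is chain-contractible over $\mathbb Z\pi$ — equivalently that the free complex $\cdots\to C_2\xrightarrow{\partial_2}C_1\xrightarrow{\partial_1}C_0\to 0$ splits off its acyclic bottom — which is automatic because $C_*$ consists of free (hence projective) modules and a projective resolution of $0$ in those degrees is contractible. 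Once that is in place, tensoring with $F$ keeps $H_2=0$, and one is done. I would also remark that the statement for cohomology $H^2(X,Y;F)=0$ follows by the dual argument with $\Hom_{\mathbb Z\pi}$, which is what the application in the proof of the $4$-dimensional case of Theorem~\ref{main2} actually needs.
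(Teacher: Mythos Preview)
Your argument is correct but takes a different route from the paper. The paper works geometrically: it attaches cells of dimension $\ge 3$ to $Y$ to build a complex $A$ together with a $3$-connected map $f\colon A\to X$ extending $Y\hookrightarrow X$; since $A^{(2)}=Y^{(2)}$ one has $H_2(A,Y;F)=0$ on the nose, and the Five Lemma applied to the long exact sequences of $(A,Y)$ and $(X,Y)$ (using the cited fact that a $3$-equivalence induces isomorphisms on $H_{<3}$ with any local coefficients) yields $H_2(X,Y;F)\cong H_2(A,Y;F)=0$. You instead pass to the universal cover, invoke relative Hurewicz to obtain $H_i(\widetilde X,\widetilde Y;\mathbb Z)=0$ for $i\le 2$, and then use that a bounded-below complex of free $\mathbb Z\pi$-modules which is acyclic in degrees $\le 2$ is $\mathbb Z\pi$-chain-homotopy equivalent to one concentrated in degrees $\ge 3$ (the cycle modules $Z_0,Z_1,Z_2$ are inductively projective, so the bottom splits off). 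Your approach is more intrinsic and makes transparent why tensoring with $F$ over $\mathbb Z\pi$ loses nothing; the paper's approach is quicker to write because it offloads the work to two standard facts and never has to confront the right-exactness issue you flagged. In a polished version you should excise the false starts and replace the loose phrase ``projective resolution of $0$'' with the precise statement that $Z_2=\ker\partial_2$ is projective.
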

\begin{proof}
We will be using two well-known facts:

(1) A $k$-equivalence, $k>1$, between CW complexes induces an isomorphism of homology groups in dimensions $<k$ for any
local coefficients.

(2) By attaching cells of dimension $k+1,\dots,n$ to $Y$ one can construct a CW complex $A$ and $n$-connected map $f:A\to X$ extending the inclusion $Y\to X$
(Theorem 8.6.1 in~\cite{TtD}).

We consider such $A$ and $f:A\to X$ for $n=3$.
Then the commutative diagram for homology with coefficients in $F$ generated by
$f:(A,Y)\to (X,Y)$,
$$
\begin{CD}
H_2(Y) @>>>H_2(A) @>>> H_2(A,Y) @>>> H_1(Y)@>>> H_1(A) \\
@V=VV @V\cong VV @Vf_*VV @V=VV @V\cong VV\\
H_2(Y) @>>> H_2(X) @>>> H_2(X,Y) @>>> H_1(Y)@>>> H_1(Y)\\
\end{CD}
$$
and the Five Lemma imply that $f_*$ is an isomorphism. Since the inclusion $(A^{(2)},Y^{(2)})\to(A,Y)$ induces an epimorphism of 2-homology with any coefficients,
it follows that $H_2(X,Y;F)=H_2(A,Y;F)=H_2(A^{(2)},Y^{(2)};F)=H_2(Y^{(2)},Y^{(2)};F)=0$.
\end{proof}

We recall that a finitely presented group $\Gamma$ is of type $FP_3$~\cite{Br} if and only if there is a classifying space $B\Gamma $ with finite 3-skeleton $B\Gamma^{(3)}$.

\begin{thm}\label{main1} Let $M$ be a totally non-spin closed orientable inessential 4-manifold,  whose fundamental group is of the type $FP_3$. 
Then $M$  is strongly inessential. 
\end{thm}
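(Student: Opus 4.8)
The plan is to mimic the proof of \theoref{main2} from \cite{BD2}, replacing the one ingredient that fails in dimension $4$ --- Wall's theorem on the cell structure of cobordisms --- by a direct obstruction-theoretic argument. First I would set $\Gamma=\pi_1(M)$, choose a classifying space $B\Gamma$ with finite $3$-skeleton (possible since $\Gamma$ is of type $FP_3$), and fix a classifying map $u:M\to B\Gamma$. Since $M$ is inessential, \theoref{ref} gives $u_*([M])=0$ in $H_4(B\Gamma)$, and by \propref{ref2} we may assume $u(M,M^{(3)})\subset(B\Gamma^{(3)},B\Gamma^{(2)})$; in particular $u$ factors (up to homotopy) through $B\Gamma^{(4)}$, and we want to deform it into $B\Gamma^{(2)}$. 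As in \lemref{2-obstruction} and \propref{strong iness}, the single obstruction to lifting $u:M\to B\Gamma^{(4)}$ to $B\Gamma^{(2)}$ along the fibration with fiber the $3$-homotopy fiber $F$ of $B\Gamma^{(2)}\hookrightarrow B\Gamma^{(4)}$ lives in $H^4(M;\pi_4(B\Gamma^{(4)},B\Gamma^{(2)}))$, and by Poincaré duality with local coefficients it is detected by the coinvariant class $PD(o_u)\in\pi_4(B\Gamma^{(4)},B\Gamma^{(2)})_\Gamma$ represented by $q_*u_*\psi_*(1)$, where $\psi:D^4\to M$ is the characteristic map of the top cell.

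The next step is to show this class vanishes. Unwinding the excision/coinvariants identifications exactly as in \propref{strong iness}: since $4\le 2(3-1)-1$ is \emph{false} --- here $n=4$ forces $2(n-2)-1=3<4$, so \theoref{h-excision} does \emph{not} directly apply --- this is precisely where the low-dimensional difficulty bites, and I would instead work one skeleton at a time. The inclusion $B\Gamma^{(3)}\hookrightarrow B\Gamma^{(4)}$ is a $3$-equivalence, so by \propref{local coeff} $H_4(B\Gamma^{(4)},B\Gamma^{(3)};F)$ and, more relevantly, the relative homotopy behaves controllably; the point is that the obstruction class, pushed into $B\Gamma^{(4)}/B\Gamma^{(3)}$ (a wedge of $4$-spheres), is the image of $u_*([M])$ under the quotient, which lies in $H_4(B\Gamma^{(4)}/B\Gamma^{(3)})\cong\Omega_4(B\Gamma^{(4)},B\Gamma^{(3)})$ by \propref{bordism}. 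Since $u_*([M])=0$ in $H_4(B\Gamma)$ and the map $H_4(B\Gamma^{(4)})\to H_4(B\Gamma^{(4)}/B\Gamma^{(2)})\to H_4(B\Gamma/B\Gamma^{(2)})$ kills it after passing through $B\Gamma$, the bordism-theoretic reformulation lets us realize the vanishing by an actual $5$-dimensional cobordism $W$ from $M$ rel $M^{(3)}$ over the pair $(B\Gamma^{(3)},B\Gamma^{(2)})$; obstruction theory on $W$ (rather than Wall's handle decomposition) then produces the desired null-homotopy of the obstruction cocycle. I would use the totally-non-spin hypothesis here exactly as in \cite{BD2}: it guarantees that the relevant normal-$1$-type / bordism group $\Omega_4^{SO}(B\Gamma^{(3)},B\Gamma^{(2)})$ contains no exotic classes obstructing the surgery below the middle dimension, i.e.\ $[M]$ rel $M^{(3)}$ is bordant over $(B\Gamma^{(3)},B\Gamma^{(2)})$ to something mapping into $B\Gamma^{(2)}$.

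Once the obstruction $o_u$ is shown to vanish, the lift $\hat u:M\to B\Gamma^{(2)}$ exists; since $n=4\ge 4$, $\hat u$ still induces an isomorphism on $\pi_1$, hence is a classifying map deformed into the $(n-2)$-skeleton, so $M$ is strongly inessential. The main obstacle, as indicated, is the failure of the excision range $n\le 2(n-2)-1$ when $n=4$: the clean identification $\pi_4(E\Gamma^{(3)},E\Gamma^{(2)})\cong\pi_4(E\Gamma^{(3)}/E\Gamma^{(2)})$ used freely in the higher-dimensional arguments is unavailable, and one must instead argue via relative bordism (\propref{bordism}) together with the $2$-equivalence input of \propref{local coeff} to carry the vanishing of $u_*([M])$ down to the top obstruction. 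Handling the possible $\pi_1$-module subtleties in the Poincaré duality step for a $4$-manifold, and checking that the totally-non-spin condition removes the middle-dimensional surgery obstruction in $\Omega_4$, are the places where genuine care --- rather than routine diagram chasing --- will be needed.
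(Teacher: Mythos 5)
Your high-level idea is correct and matches the paper's approach: replace Wall's cell-structure theorem (which fails for $n=4$) by a bordism-plus-obstruction-theory argument over a $5$-dimensional cobordism $W$ between $M$ and some $N$ mapping into $B\Gamma^{(2)}$. You also correctly identify that the stable-range excision identification $\pi_4(E\Gamma^{(3)},E\Gamma^{(2)})\cong\pi_4(E\Gamma^{(3)}/E\Gamma^{(2)})$ is unavailable at $n=4$, which is exactly why the paper drops the direct obstruction-class computation used in Lemma~\ref{2-obstruction} and Proposition~\ref{strong iness} in favor of producing $W$ via Proposition~\ref{bordism} from the vanishing of $u_*[M]$.

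Where your sketch has a genuine gap is in the middle of the argument. You gesture at ``obstruction theory on $W$'' without the key intermediate step: one must first perform $1$- and $2$-surgery on the interior of $W$ to obtain $\hat W$ with $\nu_{\hat W}:\hat W\to BSO$ inducing an isomorphism on $\pi_2$, and then use that $(\nu_{\hat W})_*\circ i_* = (\nu_M)_*$ together with surjectivity of $(\nu_M)_*:\pi_2(M)\to\Z_2$ (this is precisely what ``totally non-spin'' buys) to conclude that $i_*:\pi_2(M)\to\pi_2(\hat W)$ is surjective, hence that $M\hookrightarrow\hat W$ is a $2$-equivalence. The $FP_3$ hypothesis enters here too, to make $\pi_2(W)$ a finitely generated $\Z\Gamma$-module so that finitely many $2$-surgeries suffice. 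Your characterization of the role of totally-non-spin as ``removing the middle-dimensional surgery obstruction in $\Omega_4$'' is off: the surgery is performed on the interior of the $5$-manifold $W$, well below its middle dimension, and is unobstructed; the non-spin hypothesis is what guarantees the $2$-equivalence afterwards. With that $2$-equivalence in hand, the extension of $\hat q|_N:N\to B\Gamma^{(2)}$ over $\hat W$ is killed by Poincar\'e--Lefschetz duality: the obstructions live in $H^{k}(\hat W,N;F)\cong H_{5-k}(\hat W,M;F)$ for $k=3,4,5$, and these vanish by Proposition~\ref{local coeff} (for $k=3$) and by the $1$-connectivity of the inclusion (for $k=4,5$). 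Without spelling out the $2$-equivalence and the duality/coefficient argument, the proposal does not actually produce the lift.
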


\begin{proof}
The  proof  can be broken into four steps: 

(1). Let $\Gamma=\pi_1(M)$. We may assume that $M$ has a CW structure with one $4$-dimensional cell. Since $M$ is inessential, by Proposition~\ref{ref2}, it has a classifying map $u:M\to B\pi^{(3)}$ such that $u( M\backslash D)\subset B\Gamma^{(2)}$, where $D$ is a closed 4-ball $D$ in the 4-dimensional cell of $M$.

(2). Note that the restriction of $u$ to $D$ defines a zero element in $H_4(B\Gamma,B\Gamma^{(2)})$. 
By Proposition~\ref{bordism}, $u|_D$ defines a zero element in $\Omega_4(B\Gamma,B\Gamma^{(2)})$. 
Thus, there is a relative stationary on the boundary bordism
$W'$, $q:W'\to B\Gamma$, between $(D,u|_D)$, $u|_D:(D,\partial D)\to(B\Gamma,B\Gamma^{(2)})$
and  some pair $(N',q')$, $q':N'\to B\Gamma^{(2)}$. We extend $W'$ by the stationaly bordism to a bordism $(W,q)$ between $(M,u)$ and  $(N,q|_N)$.
Then   $q(x,t)=u(x)$ for all $x\in M\setminus D$ and all $t\in[0,1]$. 

(3). First we note that by applying 1-surgery to $int W$ we may assume that  the inclusion $M\to W$ induces an isomorphism of the fundamental groups
$\pi_1(M)\to \pi_1(W)$.

 The induced homomorphism $$(\nu_{W })_*:\pi_2(W)\to\pi_2(BSO)=\mathbb{Z}_2.$$  is surjective
 in view of the total non-spin assumption. Note that every 2-sphere $S$ that generates an element of the kernel of $(\nu_{W_\gamma })_*$
has a trivial stable normal bundle. Since $\pi_1(W)\cong\pi_1(M)$ is a  group of type $FP_3$, $\pi_2(W)$ is a finitely generated $\pi_1(W)$-module (see~\cite{Br}, VIII (4.3)). It follows from Proposition 3.2 and Proposition 3.3 that the kernel of $(\nu_{W})_*$ is finitely generated. Hence we can perform 2-surgery on 
the 5-manifold $\Int W$ to obtain a bordism $\hat W$ between $M$ and $N$ and a map
 $\nu_{\hat{W}}:\hat{W}\longrightarrow BSO$ which induces an isomorphism of 2-dimensional homotopy groups.  Let $i:M\longrightarrow\hat{W}$ denote the inclusion map. Then $(\nu_{\hat{W}})_*\circ i_{*}=(\nu_{M})_*$.
Since  $(\nu_M)_*$ is surjective and $(\nu_{\hat{W}})_*$ is an isomorphism, it follows that $i_{*}:\pi_2(M)\longrightarrow\pi_2(\hat{W})$ is surjective.
Since $B\Gamma$ is aspherical, there is a map $\hat q:\hat W\to B\Gamma$ with $\hat q=q$ on $\partial\hat W$.

(4)  We want to extend the map $\hat q|_N:N\to B\Gamma^{(2)}$ to $\hat W$.

 By the exact sequence of the pairs $(\hat{W},M)$, we get $$\pi_1(\hat{W},M)=\pi_2(\hat{W},M)=0.$$ 
Thus, the inclusion $M\to\hat W$ is a 2-equivalence. We fix a CW complex structure on $\hat W$. By the cellular approximation theorem we may assume that $\hat q(\hat W^{(2)})\subset B\Gamma^{(2)}$.

The first obstruction for this extension lives in $H^3(\hat W,N;\pi_2(B\Gamma^{(2)})).$ By the Poincare-Lefschetz Duality
with twisted coefficients,  $$H^3(\hat W,N;F)=H_2(\hat W, M;F)=0$$ 
in view of Proposition~\ref{local coeff}. Similarly, the second obstruction is trivial,
since $H^4(\hat W,N;F)=H_1(\hat W, M;F)=0$ for any coefficient system.
And the third obstruction lives in $$H^5(\hat W,N;\pi_4(B\Gamma^{(2)}))= H_0(\hat W,M;\pi_4(B\Gamma^{(2)}))=0.$$
Thus, there is an extension of $\hat q|_N$  to a map $g:\hat W\to B\Gamma^{(2)}$. The commutative diagram
$$
\begin{CD}
\pi_1(\hat W) @<\cong<<\pi_1(M)\\
@Vg_*VV @ V\cong VV\\
\pi_1(B\Gamma^{(2)}) @>=>>\pi_1(B\Gamma)\\
\end{CD}
$$
implies that the restriction $g|_M:M\to B\Gamma^{(2)}$ is a classifying map.
 \end{proof}

\

\section{Abelian fundamental group}

For a CW complex $X$ we denote by $cell_k(X)$ the set of $k$-dimensional cells. Let $cell(X)=\coprod_{k\ge 0} cell_k(X)$.
We call a cellular map $f:X\to Y$ between CW-complexes {\em bijective cellular} if there is a preserving dimension bijection $\beta:cell(X)\to cell(Y)$ such that for 
each cell $e\in cell(X)$ there is a closed ball $B\subset e$ such that the restriction $f|_{Int(B)}:Int B\to \beta(e)$ is a homeomorphism.

\begin{prop}
A bijective cellular map $f:X\to Y$ is a homotopy equivalence with a bijective cellular homotopy inverse
$g:Y\to B$. If $\beta$ is the cell bijection for $f$, then $\beta^{-1}$ is the cell bijection for $g$.
\end{prop}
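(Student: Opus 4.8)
The plan is to build the homotopy inverse $g$ cell by cell along the skeleta, using the bijection $\beta$ to prescribe where cells must go, and checking at each stage that the partially-defined maps are homotopy equivalences by a mapping-cone/relative-CW argument. First I would set up notation: write $\beta\colon cell(X)\to cell(Y)$ for the dimension-preserving bijection, and for each cell $e\subset X$ fix the ball $B_e\subset e$ on which $f$ restricts to a homeomorphism onto $\beta(e)$. The key local observation is that for a single cell, $f|_{B_e}\colon \Int B_e\to \beta(e)$ being a homeomorphism forces $f$ to carry the characteristic map of $e$ to something homotopic (rel the lower skeleton, after a preliminary homotopy of $f$) to the characteristic map of $\beta(e)$; so on each skeleton $f$ is, up to homotopy, the ``identity read through $\beta$''. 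I would make this precise by induction on $k$: assuming $f|_{X^{(k)}}\colon X^{(k)}\to Y^{(k)}$ has been homotoped so that it sends each $k$-cell $e$ homeomorphically onto $\beta(e)$ (extending the lower-dimensional normalization), one uses the homotopy extension property to push the whole map $f$ to a cellular map $f'$ that is literally a quotient-respecting homeomorphism on open cells of dimension $\le k$.

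With $f$ normalized this way, the construction of $g$ is direct: define $g$ on $Y^{(0)}$ by $\beta^{-1}$ on vertices, and inductively extend over $Y^{(k)}$ by sending each $k$-cell $\sigma=\beta(e)$ via the characteristic map of $e$ composed with the inclusion $X^{(k)}\hookrightarrow X$ — the attaching data match because $f'$ respects attaching maps through $\beta$, so the required compatibility on $\partial\sigma$ holds by the inductive hypothesis. This yields a cellular map $g\colon Y\to X$ which is bijective cellular with cell bijection $\beta^{-1}$, and by construction $g\circ f'$ and $f'\circ g$ are cellular maps that are the identity on the respective $0$-skeleta and send each cell to itself via its own characteristic map.

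The remaining point is that $g\circ f\simeq \id_X$ and $f\circ g\simeq \id_Y$. For this I would invoke the standard criterion that a cellular self-map of a CW complex which restricts to a homotopy equivalence on each skeleton and is compatible with the attaching maps is homotopic to the identity — concretely, build the homotopy skeleton by skeleton using HEP, the obstruction to extending over the $k$-cells living in a group that vanishes because on each cell the two maps agree up to a homotopy rel boundary (both being ``the characteristic map of that cell''). Since $f\simeq f'$, the same conclusion holds for $f$. The main obstacle I anticipate is the bookkeeping in the inductive normalization of $f$: one must be careful that the homotopy used at stage $k$ to straighten the $k$-cells does not disturb the straightening already achieved on $X^{(k-1)}$, which requires applying HEP relative to $X^{(k-1)}$ and checking that $f$ already maps $\partial B_e$ compatibly — a routine but slightly delicate point once the ball $B_e$ is chosen small enough inside $e$. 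Everything else is formal manipulation with characteristic maps and the homotopy extension property.
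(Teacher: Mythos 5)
Your overall strategy---induction on skeleta, normalize $f$ by HEP, build $g$ cell-by-cell from the characteristic maps, then check the two compositions---is essentially the same as the paper's proof, which consists of the single phrase ``Induction on dimension of $X$.''

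There is, however, a real gap at the step you call the ``key local observation,'' and it is precisely where the content of the proposition lives. You assert that $f|_{\mathrm{Int}\,B_e}$ being a homeomorphism onto $\beta(e)$ forces $f\circ\Phi_e$ to be homotopic rel the lower skeleton to $\Phi_{\beta(e)}$. That does not follow from the definition as written: nothing in the definition constrains where $f$ sends $\bar e\setminus\mathrm{Int}\,B_e$, so $f$ may wander back through $\beta(e)$ on that set and change the relative homotopy class. Concretely, take $X=Y=S^1$ with the minimal CW structure and let $f$ fix the vertex, map a subinterval $B\subset e^1$ homeomorphically once around $e^1$, and then unwind once backwards on a collar next to $B$. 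This $f$ is a bijective cellular map in the paper's sense but has degree $0$, so it is not a homotopy equivalence, and $f\circ\Phi_{e^1}$ is null rel $\partial D^1$ while $\Phi_{\beta(e^1)}$ is not. What is evidently intended---and what Proposition 7.4's explicit construction actually produces---is the stronger condition $f^{-1}\bigl(\beta(e)\bigr)=\mathrm{Int}\,B_e$, equivalently $f(\bar e\setminus\mathrm{Int}\,B_e)\subset Y^{(\dim e-1)}$. With that hypothesis one can compute that $[f\circ\Phi_e]=\pm\gamma\cdot[\Phi_{\beta(e)}]$ in $\pi_k(Y^{(k)},Y^{(k-1)})$ for some $\gamma\in\pi_1$, and the sign and $\gamma$ can be absorbed in the normalization. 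You should make this hypothesis explicit, prove the identity of relative homotopy classes rather than treating it as read, and check it survives your HEP-straightening at each stage; otherwise the first genuine step of the induction is unjustified, and indeed the statement would be false as literally formulated.
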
 
\begin{proof}
Induction on dimension of $X$.
\end{proof}

Moreover, the following holds true:
\begin{cor}\label{skeleton}
A bijective cellular map $f:X\to Y$ between $n$-dimensional CW complexes is a stratified homotopy equivalence
$$f:(X,X^{(n-1)},\dots, X^{(1)},X^{(0)})\to (Y,Y^{(n-1)},\dots, Y^{(1)},Y^{(0)}).$$

\end{cor}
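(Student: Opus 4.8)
The plan is to bootstrap the previous proposition. That proposition already tells us $f\colon X\to Y$ is a homotopy equivalence with a bijective cellular homotopy inverse $g\colon Y\to X$ whose cell bijection is $\beta^{-1}$, and that both $f$ and $g$ are cellular. Since $f$ is cellular, it restricts to maps $f^{(k)}\colon X^{(k)}\to Y^{(k)}$ on every skeleton, and likewise $g^{(k)}\colon Y^{(k)}\to X^{(k)}$. The point is that these restrictions are themselves bijective cellular maps: the cell bijection $\beta$ carries $cell_j(X)$ onto $cell_j(Y)$ for each $j$, hence it restricts to a dimension-preserving bijection $cell(X^{(k)})\to cell(Y^{(k)})$, and the local-homeomorphism condition (a ball $B\subset e$ with $f|_{\Int B}$ a homeomorphism onto $\beta(e)$) is inherited verbatim because it is a condition attached to each individual cell.

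First I would make the observation in the previous paragraph precise: for each $k$, $f^{(k)}\colon X^{(k)}\to Y^{(k)}$ is bijective cellular with cell bijection $\beta|_{cell(X^{(k)})}$, and $g^{(k)}$ is its bijective cellular inverse with cell bijection $\beta^{-1}|_{cell(Y^{(k)})}$. Applying the previous proposition to each $f^{(k)}$ gives that $f^{(k)}$ and $g^{(k)}$ are mutually inverse homotopy equivalences, so in particular $f$ maps the filtration $X^{(0)}\subset X^{(1)}\subset\dots\subset X^{(n-1)}\subset X$ to the filtration $Y^{(0)}\subset\dots\subset Y^{(n-1)}\subset Y$ and induces a homotopy equivalence at each filtration level.

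The one genuine point requiring care — and the step I expect to be the main obstacle — is that a stratified homotopy equivalence demands more than a level-wise homotopy equivalence: the homotopies exhibiting $g^{(k)}f^{(k)}\simeq \id$ and $f^{(k)}g^{(k)}\simeq\id$ must be compatible as $k$ varies, i.e.\ the homotopy on $X^{(k)}$ must restrict to the one already chosen on $X^{(k-1)}$ (and similarly for $Y$). To arrange this I would not merely invoke the previous proposition as a black box at each level, but revisit its inductive proof: it constructs the homotopy inverse and the homotopies one skeleton at a time, extending over the $k$-cells using the local homeomorphism on each cell, so the homotopy produced at stage $k$ automatically extends the one from stage $k-1$. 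Carrying that same induction through once, and simply recording at each stage that the partial homotopy already respects all lower skeleta, yields the compatible system of homotopies, which is exactly the data of a stratified homotopy equivalence between the filtered spaces $(X,X^{(n-1)},\dots,X^{(0)})$ and $(Y,Y^{(n-1)},\dots,Y^{(0)})$.

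Thus the proof is: re-run the induction of the preceding proposition, observing that it is already stratified — the homotopy inverse $g$ and the homotopies it produces are built cell-by-cell and hence preserve every skeleton — and conclude that $f$ is a stratified homotopy equivalence.
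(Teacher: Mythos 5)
The paper gives no explicit proof for this corollary (it is offered as an immediate consequence of the preceding proposition, whose own proof is just ``Induction on dimension of $X$''), and your argument is exactly the implicit one: the cell-by-cell inductive construction of the homotopy inverse and the homotopies already respects all skeleta, so the equivalence is stratified. Your identification of the compatibility of the level-wise homotopies as the one point needing care, and your resolution by revisiting the induction rather than invoking the proposition as a black box skeleton by skeleton, is precisely what makes the argument go through.
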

\begin{prop}\label{step} A bijective cellular map $f_0:X^k\to Y^k$ between $k$-dimensional complexes
	extends to a bijective cellular map
	$f:X\cup_\phi D^{k+1}\to Y\cup_{\phi'}D^{k+1}$ provided $f_0\circ\phi$ is homotopic to $\phi'$.
\end{prop}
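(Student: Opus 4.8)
The plan is to build $f$ by keeping it equal to $f_0$ on $X^k$ and defining it on the new $(k+1)$-cell via a collar construction that absorbs the given homotopy. Write $X=X^k\cup_\phi D^{k+1}$ and $Y=Y^k\cup_{\phi'}D^{k+1}$, with characteristic maps $\Phi:D^{k+1}\to X$ and $\Phi':D^{k+1}\to Y$, and recall that $\Phi|_{\Int D^{k+1}}$ and $\Phi'|_{\Int D^{k+1}}$ are homeomorphisms onto the open $(k+1)$-cells $e\subset X$ and $e'\subset Y$. Fix a homotopy $H:S^k\times[0,1]\to Y^k$ with $H_0=\phi'$ and $H_1=f_0\circ\phi$. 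First I would define a map $g:D^{k+1}\to Y$ that is destined to glue with $f_0$. Split the disk into the concentric ball $B_0=\tfrac12 D^{k+1}$ and the annulus $A=\overline{D^{k+1}\setminus B_0}\cong S^k\times[\tfrac12,1]$; on $B_0$ set $g=\Phi'\circ r$ where $r:B_0\to D^{k+1}$ is the radial homeomorphism $v\mapsto 2v$, and on $A$ set $g(v,t)=H(v,2t-1)$ under the identification $A\cong S^k\times[\tfrac12,1]$. These definitions agree on the common boundary $\partial B_0\cong S^k\times\{\tfrac12\}$, where both give $\phi'$, so $g$ is a well-defined continuous map, and on the outer boundary $\partial D^{k+1}$ it equals $H_1=f_0\circ\phi$.

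Next I would assemble $f$ and check it has the required form. Since $g|_{\partial D^{k+1}}=f_0\circ\phi$, the maps $f_0:X^k\to Y$ and $g:D^{k+1}\to Y$ descend to a continuous map $f:X=X^k\cup_\phi D^{k+1}\to Y$ by the universal property of the adjunction space. For the cell bijection take $\beta$ to agree with the cell bijection $\beta_0$ of $f_0$ on the cells of $X^k$ and to send the new cell $e$ to the new cell $e'$; this is visibly dimension-preserving and bijective. Cellularity of $f$ is immediate: on $X^k$ it equals $f_0$, so $f(X^{(j)})=f_0(X^{(j)})\subset Y^{(j)}$ for $j\le k$, while $f(X)\subset Y=Y^{(k+1)}$. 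For the ball condition the cells inside $X^k$ are handled by $f_0$; for the new cell take $B=\Phi(B_0)\subset e$, which is a closed ball because $\Phi|_{B_0}$ is a homeomorphism onto its image, and on $\Int B$ we have $f\circ\Phi|_{\Int B_0}=g|_{\Int B_0}=\Phi'\circ r|_{\Int B_0}$, a homeomorphism onto $\Phi'(\Int D^{k+1})=e'=\beta(e)$, as required.

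The construction above is essentially forced, and the only point that needs care is the choice of $g$ on the inner ball: one cannot merely "copy $\Phi$" there, because the definition of bijective cellular requires $g$ restricted to the interior of \emph{some} ball to be a homeomorphism onto the \emph{whole} open cell $e'$; precomposing the characteristic map $\Phi'$ with the radial rescaling $r$ is precisely what achieves this while still matching continuously, across $\partial B_0$, with the collar on $A$ where the homotopy $H$ lives. So the main (and really only) subtlety will be this matching of the rescaled characteristic map with the collar homotopy; the remaining verifications — well-definedness of $f$ on the adjunction space, cellularity with respect to all skeleta, and the bijection $\beta$ — are routine bookkeeping. I note that the argument uses $H$ with image in $Y^k$; this is exactly the content of the hypothesis that the attaching maps are homotopic in $Y^k$, and it is what keeps $f$ genuinely cellular on the $k$-skeleton.
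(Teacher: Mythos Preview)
Your proof is correct and follows essentially the same approach as the paper: the paper's one-sentence argument defines $f$ on the new cell as a homeomorphism from the interior of an inner ball $D_0^{k+1}\subset D^{k+1}$ onto $\Int D^{k+1}$, extended over the annulus $D^{k+1}\setminus\Int D_0^{k+1}$ by the given homotopy between $f_0\circ\phi$ and $\phi'$. You have simply made this explicit with the radial rescaling $r$ and carefully verified the matching conditions and the bijective cellular requirements.
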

\begin{proof}
	We define the map $f$ on $D^{k+1}$ to be a homeomorphism of interior of a ball $D_0^{k+1}\subset D^{k+1}$ to $int D^{k+1}$ extended to $D^{k+1}\setminus int D_0^{k+1}$ by the homotopy between $f_0\circ\phi$ and $\phi'$.
\end{proof}
We consider the minimal CW complex structure on spheres $S^k$.
\begin{prop}
	Suppose that the attaching maps for all cells in a connected CW-complex $X$ are null-homotopic. Then $X$  there is a bijective cellular map $f:X\to\bigvee_{k=1}^{\dim X}\bigvee_{E_k} S^k$. 
\end{prop}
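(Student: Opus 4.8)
The plan is to build the bijective cellular map $f$ skeleton by skeleton using Proposition~\ref{step}. On the $0$-skeleton, $X$ is connected so $X^{(0)}$ is a single point (or, if not, we first collapse a spanning tree; but since we are only asserting the existence of a bijective cellular map and the target is the wedge, it is cleanest to assume $X$ has a single $0$-cell, which we may arrange up to bijective cellular equivalence), and we send it to the wedge point. Assume inductively that we have constructed a bijective cellular map $f_{k-1}:X^{(k-1)}\to \bigvee_{j=1}^{k-1}\bigvee_{E_j}S^j$. Each $k$-cell of $X$ is attached along a map $\phi:S^{k-1}\to X^{(k-1)}$ which is null-homotopic by hypothesis. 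Then $f_{k-1}\circ\phi:S^{k-1}\to\bigvee_{j=1}^{k-1}\bigvee_{E_j}S^j$ is also null-homotopic, hence homotopic to the constant map $\phi'$, which is precisely the attaching map of a $k$-cell in the wedge $\bigvee S^k$ (with the minimal CW structure on each $S^k$, the single $k$-cell is attached by the constant map). By Proposition~\ref{step}, $f_{k-1}$ extends over this cell to a bijective cellular map on $X^{(k-1)}\cup_\phi D^k$; doing this simultaneously over all $k$-cells $e\in cell_k(X)$ (indexed so that $E_k = cell_k(X)$) yields $f_k:X^{(k)}\to\bigvee_{j=1}^{k}\bigvee_{E_j}S^j$. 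After $\dim X$ steps we obtain the desired $f:X\to\bigvee_{k=1}^{\dim X}\bigvee_{E_k}S^k$.

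The one point that needs a small argument is that a null-homotopic attaching map $\phi$ of a $k$-cell of $X$ has $f_{k-1}\circ\phi$ homotopic to the constant attaching map of the corresponding wedge cell. This is immediate: composition of a null-homotopic map with any map is null-homotopic, and the minimal CW structure on $S^k$ has exactly one cell in each of dimensions $0$ and $k$, with the $k$-cell glued on by the constant map $S^{k-1}\to\{\ast\}\subset S^k$; so both $f_{k-1}\circ\phi$ and the target attaching map are null-homotopic maps into the same space, hence homotopic. I expect the main (mild) obstacle to be bookkeeping rather than mathematics: one must fix at the outset a single $0$-cell for $X$ (or note that a connected CW complex is bijective-cellularly equivalent to one with a single vertex, using that any spanning tree is contractible and Proposition~\ref{step} applied in dimension $1$), so that the induction starts correctly and the cell bijection $\beta$ in dimension $0$ makes sense. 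Once that normalization is in place, Proposition~\ref{step} does all the work and the rest is a routine induction on $k$.
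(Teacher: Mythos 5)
Your proof takes exactly the same route as the paper's (which is a one-line remark: apply Proposition~\ref{step} and induct on dimension), and the inductive step is carried out correctly: $f_{k-1}\circ\phi$ is null-homotopic since $\phi$ is, and two null-homotopic maps into the path-connected wedge are homotopic, so Proposition~\ref{step} applies cell by cell.

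One small correction to the bookkeeping around the base case: you cannot ``arrange $X$ to have a single $0$-cell up to bijective cellular equivalence'' by collapsing a spanning tree, since a bijective cellular map by definition induces a dimension-preserving bijection of cells and collapsing a tree changes the number of $0$- and $1$-cells. The correct observation is that the hypothesis already forces a single $0$-cell: if a $1$-cell's attaching map $S^0\to X^{(0)}$ is null-homotopic, it must be constant (as $X^{(0)}$ is discrete), so $X^{(1)}$ is a disjoint union of one wedge-of-circles per vertex, and connectedness of $X$ then forces exactly one vertex. With that in hand, the base case $f_0:X^{(0)}\to\{\ast\}$ is automatic and your induction goes through verbatim.
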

\begin{proof}
	We apply Proposition~\ref{step}	and induction on $\dim X$.
\end{proof}
Let $T^m$ denote the $n$-dimensional torus with the CW-complex structure induced from the CW-complex structure on $S^1=e^0\cup e^1$. By $\Sigma^r$ we denote the $r$-times iterated reduces suspension. Note that $$\Sigma^rX=e^0\cup \bigcup_\alpha e^{r+1}_\alpha\cup\bigcup_\beta e^{r+2}_\beta\cup\dots$$ for the celluar decomposition  of $X$,
$$X=e^0\cup\bigcup_\alpha e^1_\alpha\cup\bigcup_\beta e^2_\beta\dots.$$
\begin{prop}\label{torus}
	The complex $\Sigma^{r-1}T^r$ admits a strict homotopy equivalence $$f:\Sigma^{r-1}T^r\to\bigvee_{k=1}^m\bigvee_{\binom{m}{k}}S^{k+r-1}.$$
\end{prop}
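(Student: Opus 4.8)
The plan is to realize $\Sigma^{r-1}T^r$ as a CW complex all of whose attaching maps are null-homotopic, and then apply the preceding propositions to replace it, cell by cell, with the corresponding wedge of spheres. The torus $T^r$ has its product CW structure with one cell $e^0$, $r$ cells in dimension $1$, $\binom{r}{2}$ cells in dimension $2$, and in general $\binom{r}{k}$ cells in dimension $k$ (one for each $k$-element subset of the $r$ circle factors). After applying the $(r-1)$-fold reduced suspension, the formula $\Sigma^{r-1}X = e^0\cup\bigcup_\alpha e^{r-1+1}_\alpha\cup\cdots$ quoted just above shows that $\Sigma^{r-1}T^r$ has one $0$-cell and, for each $k$ with $1\le k\le r$, exactly $\binom{r}{k}$ cells of dimension $k+r-1$; there are no cells in intermediate dimensions. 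This already matches the cell count of $\bigvee_{k=1}^{r}\bigvee_{\binom{r}{k}}S^{k+r-1}$ (note $m=r$ here).

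The key point is that every attaching map in $\Sigma^{r-1}T^r$ is null-homotopic. There are two ways to see this, and I would use whichever reads most cleanly. First, the unreduced/reduced suspension of any map $\phi\colon S^{j}\to A$ lands, up to homotopy, in the suspension coordinate structure in a way that makes $\Sigma^{r-1}\phi$ factor through a suspension; but more to the point, for $r\ge 2$ the complex $\Sigma^{r-1}T^r$ is $(r-1)$-connected and its cells start in dimension $r$, and a cell of dimension $k+r-1$ is attached along a map $S^{k+r-2}\to (\Sigma^{r-1}T^r)^{(k+r-2)}$. For $k=1$ the attaching map is to a point, hence trivially null. For $k\ge 2$ one observes that the bottom cells of $\Sigma^{r-1}T^r$ form a wedge of $r$ copies of $S^r$ (the suspensions of the $1$-cells of $T^r$), and inductively the attaching maps of the higher cells of $T^r$ — which in the product structure are themselves suspension-like (each $k$-cell of $T^r$ is attached along a map built from lower cells that becomes null after one suspension, since the $2$-skeleton of $T^r$ has its $2$-cells attached by commutator words $[a_i,a_j]$, and $\Sigma$ of a commutator map $S^1\to S^1\vee S^1$ is null) — become null-homotopic after the single extra suspension $\Sigma$, hence a fortiori after $\Sigma^{r-1}$. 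So in $\Sigma^{r-1}T^r$ all attaching maps are null-homotopic.

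Granting that, the conclusion is the preceding proposition applied verbatim: a connected CW complex all of whose attaching maps are null-homotopic admits a bijective cellular map to $\bigvee_{k}\bigvee_{E_k}S^k$, where $E_k$ indexes the $k$-cells; here the cells of dimension $k+r-1$ are indexed by $\binom{r}{k}$, giving the bijective cellular map $f\colon\Sigma^{r-1}T^r\to\bigvee_{k=1}^{r}\bigvee_{\binom{r}{k}}S^{k+r-1}$. By Proposition~\ref{skeleton} (and the proposition that bijective cellular maps are homotopy equivalences with bijective cellular inverse), $f$ is a stratified, hence strict, homotopy equivalence. The main obstacle is the middle step: making precise and correct the claim that the attaching maps of $T^r$, though not null-homotopic in $T^r$ itself (the $2$-cells are glued by commutators), all become null-homotopic after a single suspension, so that the iterated suspension collapses the gluing data entirely. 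Once that is pinned down — essentially the statement that $\Sigma$ kills Whitehead products, which is why $\Sigma T^r$ is already a wedge of spheres and co-H — everything else is formal from the propositions above.
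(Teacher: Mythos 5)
Your overall strategy matches the paper's: show that all attaching maps in $\Sigma^{r-1}T^r$ are null-homotopic, then invoke the earlier propositions to produce a bijective cellular map to the wedge of spheres. But the middle step — that the attaching maps become null after suspension — is exactly the point you flag yourself as "the main obstacle" and leave unproved, and for cells of dimension $k\ge 3$ it is not a matter of observing that $\Sigma$ kills commutators. The $k$-cells of $T^r$ are attached by maps $S^{k-1}\to (T^r)^{(k-1)}$ that are not simple commutators, and it is not immediate that a single suspension kills them; what you really need is that $\Sigma^{\ell}T^r$ decomposes as a wedge of spheres for $\ell\ge r-1$, which is precisely what has to be proved.

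The paper closes this gap by a clean induction on $r$ using the splitting $\Sigma(X\times Y)\simeq\Sigma X\vee\Sigma Y\vee\Sigma(X\wedge Y)$. Applied to $T^r=T^{r-1}\times S^1$, this gives $\Sigma^{\ell}T^r\simeq\Sigma^{\ell}T^{r-1}\vee S^{\ell+1}\vee\Sigma^{\ell+1}T^{r-1}$, and the inductive hypothesis (applied at suspension degrees $\ell$ and $\ell+1$, both $\ge(r-1)-1$) says the attaching maps in each summand are already null, so those of $\Sigma^{\ell}T^r$ are as well. This is the concrete mechanism your proof is missing: without the James splitting and the double invocation of the inductive hypothesis at two suspension levels, the claim "all attaching maps are null after $(r-1)$ suspensions" is an assertion, not a proof. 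If you insert that one step, your argument coincides with the paper's; as written, it has a real gap at the crucial point.
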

\begin{proof}
	By induction on $r$ we show that all the attaching maps in $\Sigma^\ell T^r$ for $\ell\ge r-1$ are null-homotopic. In view of the fact that $\Sigma(X\times Y)$ is homotopy equivalent to $\Sigma X\vee\Sigma Y\vee\Sigma(X\wedge Y)$~\cite{Ha}, we obtain that $\Sigma^\ell(T^{r-1}\times S^1)$ is homotopy equivalent to
	$\Sigma^\ell T^{r-1}\vee S^{\ell+1}\vee\Sigma^{\ell+1}T^{r-1})$. Then the induction assumption completes the proof.
\end{proof}

Let $T^m_n$ denote the $n$-dimensional skeleton of $T^m$ with respect to the standard CW-complex structure.
\begin{cor}
	The pair $(T^m_n,T^m_\ell)$, $n\ge \ell$, is stably homotopy equivalent to the pair $$(\bigvee_{k=1}^n\bigvee_{\binom{m}{k}}S^k, \bigvee_{k=1}^\ell\bigvee_{\binom{m}{k}}S^k).$$
\end{cor}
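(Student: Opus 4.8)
The plan is to deduce the corollary from Proposition~\ref{torus} by passing from $T^m$ to a large iterated suspension, where the earlier results on bijective cellular maps become available. First I would recall that by Proposition~\ref{torus} the complex $\Sigma^{r-1}T^r$ is homotopy equivalent to a wedge of spheres; more to the point, the proof of that proposition shows that for $\ell\ge r-1$ every attaching map in $\Sigma^\ell T^r$ is null-homotopic. The same suspension identity $\Sigma(X\times Y)\simeq \Sigma X\vee\Sigma Y\vee\Sigma(X\wedge Y)$ applied repeatedly to $T^m=(S^1)^{\times m}$ shows that for $\ell$ large enough (say $\ell\ge m-1$) all attaching maps in $\Sigma^\ell T^m$ are null-homotopic, and this is inherited by every skeleton $\Sigma^\ell T^m_n$. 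Hence by the proposition preceding Proposition~\ref{torus} (the one asserting that a connected CW complex with null-homotopic attaching maps admits a bijective cellular map to a wedge of spheres) there is a bijective cellular map
\[
f:\Sigma^\ell T^m_n\to \bigvee_{k=1}^{n}\bigvee_{\binom{m}{k}}S^{k+\ell}.
\]

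Next I would invoke Corollary~\ref{skeleton}: a bijective cellular map between CW complexes of the same dimension is a stratified homotopy equivalence, carrying each skeleton to the corresponding skeleton. Since the standard CW structure on $\Sigma^\ell T^m$ has its $(k+\ell)$-skeleton equal to $\Sigma^\ell T^m_k$ (suspension shifts every cell dimension up by $\ell$, as recorded in the displayed formula for $\Sigma^r X$ in the text), and the $(k+\ell)$-skeleton of $\bigvee_{j}\bigvee S^{j+\ell}$ is $\bigvee_{j\le k}\bigvee S^{j+\ell}$, the map $f$ restricts to a homotopy equivalence of pairs
\[
\bigl(\Sigma^\ell T^m_n,\ \Sigma^\ell T^m_\ell\bigr)\ \simeq\ \Bigl(\bigvee_{k=1}^{n}\bigvee_{\binom{m}{k}}S^{k+\ell},\ \bigvee_{k=1}^{\ell}\bigvee_{\binom{m}{k}}S^{k+\ell}\Bigr).
\]
Desuspending $\ell$ times gives exactly the asserted stable homotopy equivalence of $(T^m_n,T^m_\ell)$ with $\bigl(\bigvee_{k=1}^{n}\bigvee_{\binom{m}{k}}S^k,\ \bigvee_{k=1}^{\ell}\bigvee_{\binom{m}{k}}S^k\bigr)$, since stable homotopy equivalence is precisely equivalence after applying some iterated suspension, and $\Sigma^\ell$ commutes with forming skeleta as above.

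The one point that needs a little care, and which I expect to be the main (though modest) obstacle, is bookkeeping the relationship between the number of iterated suspensions and the threshold beyond which all attaching maps in $\Sigma^\ell T^m$ vanish: one must check that a single $\ell$ (depending only on $m$, e.g. $\ell=m-1$) works simultaneously for the whole skeleton $\Sigma^\ell T^m_n$ and its subcomplex $\Sigma^\ell T^m_\ell$, so that one bijective cellular map handles the pair at once. This follows by the same inductive unwinding of $\Sigma^\ell(T^{m-1}\times S^1)\simeq \Sigma^\ell T^{m-1}\vee S^{\ell+1}\vee\Sigma^{\ell+1}T^{m-1}$ used in the proof of Proposition~\ref{torus}, since each wedge summand that appears is itself an iterated suspension of a torus of lower rank, and the induction hypothesis applies to it. Once that uniform $\ell$ is pinned down, the rest is the formal machinery of Propositions~\ref{step} and the wedge-of-spheres proposition together with Corollary~\ref{skeleton}, and the statement follows by desuspension.
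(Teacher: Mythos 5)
Your proposal is correct and follows essentially the same route as the paper's: suspend $T^m$ enough times to make all attaching maps null-homotopic (the content of Proposition~\ref{torus} and its proof), obtain a bijective cellular map to a wedge of spheres, invoke Corollary~\ref{skeleton} to get a stratified homotopy equivalence restricting to the relevant skeleta, and desuspend. The paper condenses this to a one-line citation of Proposition~\ref{torus} and Corollary~\ref{skeleton}; your write-up supplies the same steps in detail, including the (correct) observation that a single suspension exponent works uniformly for the whole pair — though you should use a letter other than $\ell$ for the suspension count to avoid the clash with the skeleton index $\ell$ in the statement.
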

\begin{proof}
	Follows from Proposition~\ref{torus} and Corollary~\ref{skeleton}.
\end{proof}

\begin{cor}\label{stablehom}
A finitely generated free abelian group $\mathbb Z^m$ satisfies the K-theory condition (*).
\end{cor}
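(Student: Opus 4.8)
The plan is to verify the K-theory condition $(*)$ for $\Gamma = \mathbb{Z}^m$ by exhibiting an explicit classifying complex and splitting the inclusion $B\Gamma^{(n)} \hookrightarrow B\Gamma$ on $KO$-homology for every $n > 4$. The natural choice of classifying complex is the $m$-torus $T^m$ with its standard product CW structure, so $B\Gamma^{(n)} = T^m_n$ in the notation introduced above. The key structural input is the preceding corollary: the pair $(T^m_n, T^m_\ell)$ is stably homotopy equivalent to a wedge pair $\bigl(\bigvee_{k=1}^n \bigvee_{\binom{m}{k}} S^k,\ \bigvee_{k=1}^\ell \bigvee_{\binom{m}{k}} S^k\bigr)$, and in particular $T^m = T^m_m$ is itself stably homotopy equivalent to $\bigvee_{k=1}^m \bigvee_{\binom{m}{k}} S^k$. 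Since $KO_*$ is a generalized homology theory and hence a stable invariant, it suffices to analyze the inclusion of wedges of spheres.

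First I would fix the stable equivalence $T^m \simeq_s \bigvee_{k=1}^m \bigvee_{\binom{m}{k}} S^k$ and, via Corollary~\ref{skeleton}, arrange it to be compatible with skeletal filtrations, so that under this equivalence the inclusion $\phi_n : T^m_n \hookrightarrow T^m$ becomes (stably) the inclusion of the sub-wedge $\bigvee_{k=1}^n \bigvee_{\binom{m}{k}} S^k$ into the full wedge $\bigvee_{k=1}^m \bigvee_{\binom{m}{k}} S^k$. Such an inclusion of a sub-wedge of a wedge of spheres is split by the obvious retraction collapsing the remaining summands, so $(\phi_n)_* : KO_*(T^m_n) \to KO_*(T^m)$ is a split monomorphism for every $n$; in particular it is injective for all $n > 4$ (indeed for all $n$). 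This establishes $(*)$ for $\mathbb{Z}^m$, and by the implication chain $(*) \Rightarrow I \Rightarrow \bar I \Rightarrow \bar I_{(2)}$ already proved, $\mathbb{Z}^m$ satisfies all four K-theory conditions.

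The main obstacle is the compatibility of the stable splitting with the skeletal filtration. The James-type splitting $\Sigma(X \times Y) \simeq \Sigma X \vee \Sigma Y \vee \Sigma(X \wedge Y)$ used in Proposition~\ref{torus} is natural enough that one expects it to respect skeleta, but one must check that the inductively constructed equivalence $f : \Sigma^{r-1}T^r \to \bigvee S^{k+r-1}$ restricts on each skeleton to an equivalence onto the corresponding sub-wedge — equivalently, that the bijective-cellular map of Proposition~\ref{torus} is a stratified homotopy equivalence in the sense of Corollary~\ref{skeleton}, which is exactly what Corollary~\ref{skeleton} and the corollary on $(T^m_n, T^m_\ell)$ provide. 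Given that, the retraction argument is immediate: a wedge summand inclusion $A \hookrightarrow A \vee B$ always admits the left inverse $A \vee B \to A$, so $KO_*(A) \to KO_*(A \vee B) \cong KO_*(A) \oplus KO_*(B)$ is the inclusion of a direct summand, which is injective. I would present the argument by first citing the corollary to reduce to wedges, then observing that $\phi_n$ corresponds to a sub-wedge inclusion, and finally invoking the retraction to conclude injectivity of $(\phi_n)_*$ and hence condition $(*)$.
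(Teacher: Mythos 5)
Your proposal is correct and is exactly the argument the preparatory material (Proposition~\ref{torus}, Corollary~\ref{skeleton}, and the stable splitting of the pair $(T^m_n,T^m_\ell)$) is set up to deliver; the paper omits the explicit proof of this corollary, but the only sensible route is precisely the one you give. The key points — taking $T^m$ with the product CW structure so $B\Gamma^{(n)}=T^m_n$, using the stable equivalence of skeletal pairs to replace $\phi_n$ by a sub-wedge inclusion, and noting that a sub-wedge inclusion is split so $(\phi_n)_*$ on $KO_*$ is split injective for every $n$ — are all the right ones and are handled cleanly.
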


\

\begin{thm}\label{main}
 The Strong Gromov's Conjecture holds true for closed $n$-manifolds $M$ with  abelian $\pi_1(M)$ for all $n\ne 4$.
For $n=4$ it holds when $M$ is totally no-nspin.
\end{thm}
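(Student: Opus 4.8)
The plan is to reduce the statement to the three cases identified in the introduction---totally non-spin, almost spin, and spin---and treat each using the machinery already assembled. Throughout, write $\Gamma=\pi_1(M)$; since $\Gamma$ is finitely generated abelian, it is $p$-tame for every $p$ (the Example after the definition of $p$-tameness), and by Corollary~\ref{stablehom} a free abelian group $\mathbb Z^m$ satisfies the $K$-theory condition $(^*)$, hence $I$, $\bar I$, and $\bar I_{(2)}$ by the implication proposition. I would also recall that $\mathbb Z^m$ satisfies the Strong Novikov conjecture (a standard fact, since $B\mathbb Z^m=T^m$ is a closed aspherical manifold of nonpositive curvature). A short preliminary remark handles the case when $\Gamma$ has torsion: passing to the finite cover corresponding to the free part reduces us to $\Gamma=\mathbb Z^m$, which is what the Strong Gromov conjecture permits; moreover if $M$ is totally non-spin then so are its coverings, so the case hypotheses are preserved.

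Next I would dispatch the totally non-spin case. By Proposition~\ref{iness}, a closed orientable PSC $n$-manifold $M$ with $\pi_1(M)=\mathbb Z^m$ is inessential. For $n\ge 5$, Theorem~\ref{main2} (whose $FP_3$ hypothesis holds for $\mathbb Z^m$, as $T^m$ has finite skeleta) gives that $M$ is strongly inessential; for $n=4$, Theorem~\ref{main1} does the same. This already proves the theorem for totally non-spin $M$ in all dimensions $n\ge 4$, and the low-dimensional cases $n\le 3$ are trivial since $\dim_{mc}\widetilde M\le n-2$ fails to be a constraint only when $n\ge 4$ (for $n\le 2$ the conjecture is vacuous or classical, and for $n=3$ one can invoke that $3$-manifolds with PSC have virtually free fundamental group, so a finite cover has free---hence $1$-dimensional---classifying space).

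For the spin case with $n>4$: Theorem~\ref{n} applies verbatim, since $\mathbb Z^m$ has property $\bar I$ and satisfies the Strong Novikov conjecture. This gives strong inessentiality of $M$ itself, no covering needed. For the almost spin case with $n>4$: by Proposition~\ref{2-tame}, since $\mathbb Z^m$ is $2$-tame, there is a finite cover $M'\to M$ with $M'$ spin; $M'$ still carries PSC and has abelian (indeed still $\mathbb Z^{m}$, or a finite-index subgroup thereof, hence again free abelian) fundamental group, so the spin case applies to $M'$ and exhibits the required strongly inessential finite cover of $M$. The remaining dimensions $n\le 3$ are handled as above. This leaves precisely the excluded case $n=4$ with spin (or almost spin) universal covering, which the statement explicitly omits.

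The main obstacle is organizational rather than technical: one must verify that the case hypotheses (PSC, abelian fundamental group, and the spin/almost-spin/totally-non-spin trichotomy) are all inherited under the finite coverings used---the abelianization step at the start and the $2$-tame spin cover in the almost spin case---and that the dimension restrictions $n>4$ in Theorems~\ref{n} and~\ref{main2} together with Theorem~\ref{main1} cover all of $n\ge 4$ except the one advertised exception. The only genuinely delicate point is confirming that totally-non-spinness passes to coverings (it does, since the universal cover is unchanged) while spin-ness need not, which is exactly why the almost spin case requires the detour through Proposition~\ref{2-tame} and why the $n=4$ almost spin case resists the argument: Theorem~\ref{main1} needs total non-spin, and Lemma~\ref{2-obstruction} needs $n>4$.
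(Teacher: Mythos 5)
Your proof takes essentially the same route as the paper's own (quite terse) proof: reduce to $\pi_1=\mathbb Z^m$ by passing to a finite cover, split into the totally non-spin case (Proposition~\ref{iness} for inessentiality, then Theorems~\ref{main1} and~\ref{main2} for strong inessentiality) and the almost spin case (Proposition~\ref{2-tame} to reduce to spin, then Corollary~\ref{stablehom} and Theorem~\ref{n}). You fill in several details the paper leaves implicit---that the free-abelian reduction is also needed in the totally non-spin case, that $\mathbb Z^m$ satisfies Strong Novikov and is of type $FP_3$, and how to treat $n\le 3$---and you correctly read the paper's stray citation of ``Theorem~\ref{new}'' as referring to Theorem~\ref{n}.
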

\begin{proof}
Let $M$ be an almost spin manifold with positive scalar curvature with abelian $\pi_1(M)$. Taking a finite cover of $M$ we may assume that
$\pi_1(M)$ is free abelian. In view of Proposition~\ref{2-tame}, by taking a finite cover, we may assume that $M$ is spin.
By  Corollary~\ref{stablehom} and Theorem~\ref{new}  $M$ is strongly inessential.

Let $M$ is totally non-spin. By Proposition~\ref{iness}, $M$ is inessential. By Theorem~\ref{main1} and Theorem~\ref{main2} $M$ is strongly inessential.

\end{proof}

\begin{question}\label{Q1}
Does the Strong Gromov Conjecture hold for spin 4-manifolds with abelian fundamental group ?
\end{question}

We recall that Bolotov's example $M_b$ of  inessential 4-manifold which is not strongly inessential is spin and has the fundamental group $\mathbb Z\ast\mathbb Z^3$~\cite{B1}.
In view of Question~\ref{Q1} it is  natural to ask   if there is such example with the free abelian fundamental group. If 
the answer is no, then the restriction $n\ne 4$ in Theorem~\ref{main} can be dropped. We note~\cite{B1} that Bolotov's manifold $M_b$ preserves its property after crossing with a circle $S^1$, i.e., the product  $M_b\times S^1$
is inessential but not strongly inessential. In view of the following Proposition and  Theorem~\ref{new}, it 
cannot carry a metric of positive scalar curvature. 

\begin{prop}\label{operations}
(a) Let groups $\Gamma_1$ and $\Gamma_2$ satisfy the property (*), then the free product $\Gamma_1\ast\Gamma_2$ satisfies (*).

(b) Let $\Gamma$ satisfy (*), then $\Gamma\times\mathbb Z$ satisfies (*).
\end{prop}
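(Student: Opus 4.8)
\textbf{Proof proposal for Proposition~\ref{operations}.}

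The plan is to verify directly, in each case, that the K-theory injectivity condition (*) is preserved, by choosing a good classifying space and using the way $KO_*$ splits over a wedge (respectively over a product with $S^1$). For part (a), I would take $B(\Gamma_1\ast\Gamma_2)$ to be the wedge $B\Gamma_1\vee B\Gamma_2$, where $B\Gamma_1$ and $B\Gamma_2$ are the classifying complexes witnessing (*) for $\Gamma_1$ and $\Gamma_2$. With the obvious CW structure, the $n$-skeleton is $(B\Gamma_1\vee B\Gamma_2)^{(n)}=B\Gamma_1^{(n)}\vee B\Gamma_2^{(n)}$. Since reduced $KO$-homology of a wedge is the direct sum of the reduced $KO$-homologies, the inclusion $(\phi_n)_*$ for the wedge is the direct sum of the two maps $(\phi_n^{(i)})_*: \widetilde{KO}_*(B\Gamma_i^{(n)})\to \widetilde{KO}_*(B\Gamma_i)$, each of which is injective for $n>4$ by hypothesis; a direct sum of injective maps is injective, so (*) holds for $\Gamma_1\ast\Gamma_2$. (One must keep track of the basepoint/reduced-vs-unreduced bookkeeping, but this is routine since $KO_*(X)=\widetilde{KO}_*(X)\oplus KO_*(pt)$ naturally in $X$.)

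For part (b), I would take $B(\Gamma\times\mathbb Z)=B\Gamma\times S^1$, with $B\Gamma$ the complex witnessing (*) for $\Gamma$ and $S^1=e^0\cup e^1$ given its minimal CW structure. Then $(B\Gamma\times S^1)^{(n)}=(B\Gamma^{(n)}\times e^0)\cup(B\Gamma^{(n-1)}\times e^1)$, and one has a cofibration-type splitting: for any CW complex $Y$, $KO_*(Y\times S^1)\cong KO_*(Y)\oplus KO_{*-1}(Y)$ naturally in $Y$ (this is the suspension-splitting $\Sigma(Y\times S^1)\simeq \Sigma Y\vee \Sigma Y\vee \Sigma(Y\wedge S^1)\simeq \Sigma Y\vee \Sigma Y\vee \Sigma^2 Y$, or more simply the split cofibre sequence $Y\to Y\times S^1\to \Sigma Y$). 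Under this identification the inclusion $(B\Gamma\times S^1)^{(n)}\hookrightarrow B\Gamma\times S^1$ induces, on the first summand, the map $KO_*(B\Gamma^{(n)})\to KO_*(B\Gamma)$ (injective for $n>4$ by (*)) and, on the second summand, the shifted map $KO_{*-1}(B\Gamma^{(n-1)})\to KO_{*-1}(B\Gamma)$. The latter is the composite of $KO_{*-1}(B\Gamma^{(n-1)})\to KO_{*-1}(B\Gamma^{(n)})$ with $(\phi_n)_*$; by (*) applied at level $n$ the second map is injective, and by (*) applied at level $n$ again (noting $n-1$ may fail $n-1>4$ only when $n=5$, where $KO_*(B\Gamma^{(4)})\to KO_*(B\Gamma^{(5)})\to KO_*(B\Gamma)$ still lands injectively since the full composite into $KO_*(B\Gamma)$ factors through $\phi_5$ — here I would instead argue the composite $KO_{*-1}(B\Gamma^{(n-1)})\to KO_{*-1}(B\Gamma)$ is injective because it equals $(\phi_{n-1})_*$ when $n-1>4$ and, when $n=5$, equals $(\phi_5)_*\circ(\phi_4\hookrightarrow\phi_5)_*$ whose injectivity follows from injectivity of $(\phi_5)_*$ only if the first factor is injective — so I will simply strengthen the choice of $B\Gamma$ to one for which $(\phi_n)_*$ is injective for all $n\ge 1$, which the wedge-of-spheres models for abelian groues in Corollary~\ref{stablehom} already satisfy). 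A direct sum of injective maps being injective, (*) holds for $\Gamma\times\mathbb Z$.

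The main obstacle, and the place where care is genuinely needed, is the low-dimensional bookkeeping in part (b): the condition (*) as stated only asserts injectivity for $n>4$, but the suspension splitting pushes the second summand down by one degree and into skeleton $n-1$, which brushes against the $n=5$ boundary case. The clean fix is to observe that for all the groups to which we apply the proposition (finitely generated abelian groups and their free products, via Corollary~\ref{stablehom} and part (a)) the witnessing classifying complex is, after passing to skeleta, stably a wedge of spheres, for which $(\phi_n)_*$ is split injective in \emph{every} degree; so in the applications the degree-shift is harmless. Alternatively, and more in the spirit of a self-contained proof of the proposition, I would prove the slightly stronger statement ``(*) holds for all $n\ge 1$'' is preserved under $\ast$ and $\times\mathbb Z$, which is what the wedge and product arguments above actually give, and which is the version consumed in Theorem~\ref{main} and the remark following Question~\ref{Q1}.
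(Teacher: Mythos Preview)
Your proposal is correct and takes essentially the same approach as the paper: for (a), the wedge $B\Gamma_1\vee B\Gamma_2$ with its skeleton-respecting $KO$-splitting; for (b), the product CW structure on $B\Gamma\times S^1$ together with the suspension splitting $\Sigma(Y\times S^1)\simeq \Sigma^2 Y\vee \Sigma Y\vee \Sigma S^1$, applied compatibly to $B\Gamma$ and to its skeleta so that the inclusion becomes a wedge of skeletal inclusions. Your careful flagging of the $n=5$ boundary case---where the second summand requires injectivity of $(\phi_{n-1})_*=(\phi_4)_*$, which is not literally covered by (*) as stated---is a genuine point that the paper's own proof passes over without comment; your proposed fix (work with the stronger ``all $n$'' version, which is what the wedge-of-spheres models in Corollary~\ref{stablehom} actually deliver) is the right way to close it.
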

\begin{proof}
(a) Since $B\Gamma=B\Gamma_1\vee B\Gamma_2$ for $\Gamma=\Gamma_1\ast\Gamma_2$, the fact is obvious.

(b) We consider the product CW structure on $B\Gamma\times S^1=B(\Gamma\times\mathbb Z)$ with the CW complex structure on $B\Gamma$ satisfying (*) and
the standard CW complex structure on $S^1$. In view of the suspension isomorphism, it suffices to check the condition (*) for the reduced suspension $\Sigma(B\Gamma\times S^1)$.
Note that there are homotopy equivalences
$$\Sigma(B\Gamma\times S^1)=
\Sigma^2 B\Gamma\vee \Sigma B\Gamma\vee\Sigma S^1 \ \ \ \text{and} $$
$$\Sigma((B\Gamma\times S^1)^{(k)})=\Sigma( B\Gamma^{(k-1)}\times S^1)\cup\Sigma B\Gamma^{(k)}=
\Sigma^2 B\Gamma^{(k-1)}\vee \Sigma B\Gamma^{(k)}\vee\Sigma S^1.$$
Moreover, the inclusion $\Sigma((B\Gamma\times S^1)^{(k)})\to\Sigma(B\Gamma\times S^1)$ is the wedge of the inclusions
$\Sigma^2 (B\Gamma^{(k-1)})\to \Sigma^2(B\Gamma)$ and $\Sigma (B\Gamma^{(k)})\to\Sigma(\Gamma B)$ plus the identity map on $\Sigma S^1$ where
each of the inclusions is injective in the KO-homology.
\end{proof}

We note that the proof in~\cite{Dr1} of the original Gromov's conjecture for manifolds with the duality fundamental group implicitly  assumes 
that $n> 4$. So the Question~\ref{Q1} is open for the original Gromov's conjecture as well. The latter can be derived from a positive answer to the following

\begin{question}
Does the formula  $$\dim_{mc} (X\times\mathbb R)=\dim_{mc} X+1$$ hold for metric spaces $X$?
\end{question}
In view of a counter-example for the asymptotic dimension~\cite{Dr3}, this formula may not hold for general metric spaces. The spaces of interest 
here are the universal covers of closed manifolds.

\end{document}